\documentclass[12pt]{article}
\usepackage{amsmath}
\usepackage{amsfonts}
\usepackage{amssymb}
\usepackage{latexsym}
\usepackage{curves}
\usepackage{color}
\usepackage{enumerate}
\usepackage{amsthm}

\textheight 8.3 true in
\textwidth 6.1 true in
\voffset -1 true cm
\hoffset -1.5 true cm

\begin{document}
\baselineskip 16pt

\def \mod{\hbox{\ \rm mod }}
\def \Aut {\mbox{\rm Aut}}
\def \min {\mbox{\rm min}}
\def\M{\mathcal{M}}
\newcommand{\ZZ}{\mathbb{Z}}

\newtheorem{theorem}{Theorem}[section]
\newtheorem{lemma}[theorem]{Lemma}
\newtheorem{prop}[theorem]{Proposition}
\newtheorem{cor}[theorem]{Corollary}
\theoremstyle{remark}
\newtheorem{remark}[theorem]{Remark}

\title{Classification of reflexibile  regular Cayley maps  for dihedral groups}
\author{Istv\'an Kov\'acs,$^{a}$   \; Young Soo Kwon$^{\, b}$ \\  [+0.75ex]
$^a$ {\small FAMNIT \& IAM, University of Primorska, Muzejski trg 2, 6000 Koper, Slovenia} \\ [-0.5ex]
$^b$ {\small Mathematics, Yeungnam University, Kyongsan 712-749, Republic of Korea}
}
\date{}
\maketitle
\let\thefootnote\relax\footnote{
The work was partially supported by the Slovenian-Korean bilateral
project, grant no.\ BI-KOR/13-14-002.  The first author was
supported in part by the Slovenian Research Agency (research program
P1-0285 and research projects N1-0032, J1-5433, and J1-6720). The
second author was supported by Basic Science Research Program
through the National Research Foundation of Korea(NRF) funded by the
Ministry of Education, Science and Technology
(2010-0022142).  \\
[+0.5ex] {\it  E-mail addresses:} istvan.kovacs@upr.si (Istv\'an
Kov\'acs), ysookwon@ynu.ac.kr (Young Soo Kwon).
}

\begin{abstract}
In this paper, we classify reflexible regular Cayley maps for dihedral groups.
 \\

\noindent{\bf Keywords:}  Cayley map, regular embedding, reflexible map \\
\noindent{\bf 2000 Mathematics subject classification:} 05C10, 05C25, 57M15
\end{abstract}

\section{ Introduction}

Let $G$ be a finite group, and let $X \subseteq G\!\setminus\!\{1_G \}$ be
a  generating set for $G$ which is closed under inverse. The {\em Cayley graph} $C(G,X)$ for the pair $(G,X)$ has vertex set $G$, with any two vertices
$g,h \in G$ joined by an edge whenever $g^{-1}h \in X$. It follows that the Cayley graphs considered in this article are finite, connected, undirected and simple.

A (topological) \emph{map} is a 2-cell embedding of a connected graph into a closed surface. We are interested in particular embeddings of Cayley graphs on
surfaces. It is well known that to describe an embedding of graph into an orientable surface, one just needs to specify, at every vertex, a cyclic ordering of edges
emanating from the vertex.  A graph automorphism which can be extended
to a homeomorphism of the supporting surface onto itself is called
a {\em map automorphism}. If the group of orientation-preserving automorphisms of
an embedding acts transitively on incident vertex-edge pairs called \emph{arcs}, then
this action is also regular. In this case,  the embedding (map) is called {\em regular}.
In addition, if there is an orientation-reversing map automorphism,  the map is called {\em reflexible}.

For a Cayley graph $C(G,X)$, all edges incident to a
vertex $g\in G$ have the form $\{g,gx\}$ where $x\in X$.
Hence to describe an embedding of a Cayley graph in an
orientable surface, it suffices to specify a cyclic permutation of
the generating set at each vertex. If these cyclic permutations are the same at each vertex, which is a cyclic permutation $p$ of the set
$X$, then the embedding is called a {\em Cayley map} and is
denoted by $CM(G,X,p)$. In this paper, when we describe $p$ clearly, we sometimes
omit the description of $X$ because $p$ contains full information of $X$.
Since left multiplication by any fixed element of $G$ induces an
orientation-preserving automorphism of $CM(G,X,p)$, Cayley maps are
vertex-transitive. So a Cayley map $CM(G,X,p)$ is regular if and
only if the stabilizer $\Aut(\M)_{1_G}$  of the vertex $1_G$ in the
group $\Aut(\M)$ acts regularly on the arcs emanating from $1_G$. A
generator of $\Aut(\M)_{1_G}$  satisfies a certain system of
identities \cite{JA1}, subsequently re-stated in terms of so-called
skew-morphisms \cite{JAS} which will be introduced later. For more
information on regular Cayley maps, the reader is referred to
\cite{BI,JAS,RS+,SS1,SS2}.

For a Cayley map $\M = CM(G,X,p)$, $\M$ is called \emph{$t$-balanced} if $p(x)^{-1} = p^t(x^{-1})$ for every $x \in X$.
In particular, if $t=1$ then $\M$ is called \emph{balanced}, and if $t=-1,$ then $\M$ is said to be \emph{anti-balanced}.
Note that a Cayley map ${\cal M} =
CM(G,X,p)$ is regular and balanced if and only if there exists a
group automorphism $\psi$ of $G$ whose restriction on $X$ is $p$.
Two Cayley maps $CM(G_1,X_1,p_1)$ and $ CM(G_2,X_2,p_2)$ are said
to be {\em equivalent} if there exists a group isomorphism $\phi :
G_1 \to G_2$ mapping $X_1$ to $X_2$ such that $\phi p_1=p_2\phi$.
Equivalent Cayley maps are isomorphic as maps. On the other hand, isomorphic Cayley maps may not be equivalent as Cayley maps.

The class of cyclic groups is the only class of finite groups on which all regular Cayley maps have been classified due to the work of Conder and Tucker \cite{CT}. Regarding other groups, only partial classifications are known (see, e.g.\ \cite{KKF,KMM,KO,WF}).
In this paper we investigate in detail properties of reflexible
Cayley maps for dihedral groups. As the main result, we classify reflexible regular Cayley maps for dihedral groups. The following theorem is the main result.

\begin{theorem} \label{main-result}
Let  $\M = CM(D_n, X, p)$ be a reflexible regular Cayley map for the
dihedral group $D_n$ of valency $d,$ where $D_n=\left< a,b \mid
a^n=b^2=1, bab=a^{-1} \right>$ is a dihedral group of order $2n$. If
$d=2$ then $\M$ is isomorphic to  $CM(D_n, \{b,ab\}, (b, ab))$, and
if $d=3$ then $\M$ is isomorphic to
$CM(D_2,\{b,ab,a\},(b,ab,a))$, $CM(D_3,\{b,ab,a^2b\},(b,ab,a^2b))$
or $CM(D_4,\{b,$ $ a,a^{-1} \},(b,a,a^{-1}))$. For $d \ge 4$, $\M$
is isomorphic to one of the maps in the following list:
\begin{enumerate}
\item[$(1)$] ${\cal M}_1 = CM(D_n, X_1, p_1)$ with
$$ p_1 = (b, ab, a^{\ell+1}b,  a^{\ell^2+\ell+1}b,  \ldots, a^{\ell^{d-2}+\ell^{d-3}+\cdots+1}b)$$ for some $\ell$ satisfying $\ell^2 \equiv 1 (\mod n)$, where $d$ is the smallest positive integer such that $\ell^{d-1}+\ell^{d-2}+\cdots+1 \equiv 0 (\mod n)$.
\item[$(2)$] A positive integer $n$ is even and ${\cal M}_2 = CM(D_n, X_2, p_2)$ with $$ p_2 = (b,a,a^2 b,a^3,a^4 b,\ldots,a^{n-2}b,a^{n-1}).$$
\item[$(3)$] A positive integer $n=2m$ is a multiple of $8$  and ${\cal M}_3 = CM(D_n, X_3, p_3)$ with $$ p_3 = (b,a,a^{m+2} b,a^3,a^4 b,\ldots,a^{m-2}b,a^{n-1}).$$
\item[$(4)$]  $n=3k$ for some positive integer $k$ and ${\cal M}_4 = CM(D_n, X_4, p_4)$ with $$ p_4 = (a^{-1},a, b,a^2b).$$
\item[$(5)$] $n=2m=8k+4$ for some positive integer $k$ and ${\cal M}_5 = CM(D_n, X_5, p_5)$ with $$ p_5 = (a^{-1},a, b,a^{m+1}, a^{m-1}, a^{m+2}b).$$
\item[$(6)$] $n=2m=4k+2$ for some positive integer $k$ and ${\cal M}_6 = CM(D_n, X_6, p_6)$ with $$ p_6 = (b,a^{m-2}b, a, a^{-2} b, a^{m}b, a^{-1}).$$
\end{enumerate}
Furthermore all the Cayley maps listed above are reflexible regular Cayley maps.
\end{theorem}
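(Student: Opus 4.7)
The plan is to translate the problem into the algebraic language of skew-morphisms. By Jajcay--\v Sir\'a\v n, a Cayley map $CM(D_n,X,p)$ is regular if and only if there is a skew-morphism $\psi$ of $D_n$ such that $X$ is an orbit of $\psi$ containing a generating set and $p=\psi|_X$, and reflexibility adds the existence of an involutory automorphism $\sigma$ of $D_n$ fixing the root $x_0\in X$ and satisfying $\sigma\psi\sigma=\psi^{-1}$. So I would first set up these conditions explicitly for $D_n$. The tiny cases $d=2,3$ are handled by direct inspection, since the possible generating sets are very few. For $d\ge 4$ the core of the work is to split into two principal cases according to the composition of $X$, since $D_n$ has the index-$2$ rotation subgroup $\langle a\rangle$: either $X\subseteq D_n\setminus\langle a\rangle$ (all reflections) or $X\cap\langle a\rangle\neq\emptyset$ (mixed).

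\textbf{The all-reflection case.} When every element of $X$ is a reflection, write $x_0=b$ and successive elements of $X$ as $p^i(b)=a^{s_i}b$. The skew-morphism identity applied to products $b\cdot(a^{s_i}b)$ and the fact that $b^2=1$ force a linear recursion $s_{i+1}=\ell s_i+1\pmod n$ for a fixed $\ell$ dictated by the action of $\psi$ on $\langle a\rangle$. Iterating yields $s_i=\ell^{i-1}+\ell^{i-2}+\cdots+1$. Closure of $X$ under inversion $a^{s}b\mapsto a^{-s}b$ together with the reflexibility condition $\sigma\psi\sigma=\psi^{-1}$ translate into $\ell^2\equiv 1\pmod n$, and $d$ is the smallest positive integer with $s_d\equiv 0\pmod n$. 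This gives exactly family $(1)$.

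\textbf{The mixed case.} Here the rotations in $X$ come as inverse pairs $\{a^r,a^{-r}\}$, and the positions of rotations and reflections in the cyclic sequence $p$ are severely constrained by the skew-morphism power function $\pi$: roughly, $\pi(x)$ depends only on whether $x$ is a rotation or a reflection, which forces a rigid interleaving pattern. I would parametrise $p$ by recording where the rotations sit among the reflections and use the identity $\psi(xy)=\psi(x)\psi^{\pi(x)}(y)$ together with $p(x)^{-1}\in X$ (inverse closure) to derive a small system of congruences on $n$ and on the exponents appearing in $X$. The reflexibility involution $\sigma$ can be shown to be of the form $a\mapsto a^{\pm 1}$, $b\mapsto a^tb$ for some $t$, and imposing $\sigma\psi\sigma=\psi^{-1}$ on each candidate pattern kills most possibilities, leaving precisely families $(2)$--$(6)$. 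The divisibility conditions ($n$ even, $8\mid n$, $3\mid n$, $n\equiv 4\pmod 8$, $n\equiv 2\pmod 4$) emerge naturally as the solvability conditions for these congruences, and the valencies $4$ and $6$ arise because longer mixed orbits would violate the constraints.

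\textbf{Main obstacle and conclusion.} The principal difficulty lies in the mixed case: ruling out all other interleaving patterns of rotations and reflections and showing that the only admissible valencies, beyond family $(2)$--$(3)$ of arbitrary even length, are exactly $d=4$ and $d=6$ as realised in $(4)$--$(6)$. This requires a careful bookkeeping of the power function together with the consequences of the reflexibility involution, and is likely where most of the paper's technical content sits. Once the classification list is established, the converse --- that each listed $(D_n,X_i,p_i)$ is a reflexible regular Cayley map --- is a routine verification: one exhibits the skew-morphism $\psi_i$ whose restriction to $X_i$ is $p_i$ (in each case a concrete formula in $a,b$) and an explicit involution $\sigma_i$ of $D_n$ implementing the reflection, checking the identities $\psi_i$ must satisfy and $\sigma_i\psi_i\sigma_i=\psi_i^{-1}$.
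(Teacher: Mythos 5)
Your outline coincides with the paper's in its broad strokes (skew-morphisms, the split into the all-reflection/balanced case giving family $(1)$ versus the mixed case giving $(2)$--$(6)$, small valencies by inspection, and routine verification of the list at the end), but the mixed case --- which you yourself identify as the principal difficulty --- is exactly where your proposal has genuine gaps rather than a proof. First, your starting point that reflexibility is witnessed by a group automorphism $\sigma$ of $D_n$ with $\sigma p\sigma^{-1}=p^{-1}$ on $X$ is not part of the general Jajcay--\v Sir\'a\v n setup: in general one only gets an antirotary \emph{bijection}. That it can be upgraded to a group automorphism (and, in the non-balanced case, to one with $\alpha(a)=a^{-1}$, $\alpha(b)=a^tb$) is a theorem here, and its proof depends on first showing that $X\cap A_n$ contains a generator of $A_n$. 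That generator statement is itself a substantial intermediate result: the paper proves it by induction on $n$ using normal quotients of regular Cayley maps, the classification of core-free regular dihedral Cayley maps, and the Kov\'acs--Nedela theorem that a generating orbit of a skew-morphism of $\ZZ_n$ contains a generator of $\ZZ_n$. Without it you cannot normalize the rotation in $X$ to $a$ up to isomorphism, and your ``small system of congruences'' never closes up.

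Second, your key structural heuristic --- that $\pi(x)$ depends only on whether $x$ is a rotation or a reflection, forcing a rigid interleaving --- is false: in family $(2)$, for instance, the power function takes the distinct values $4j+1$ on the rotations $a^{2j}$ and $4j+3$ on $a^{2j+1}$, so $\pi$ separates the rotations of $X$ from one another. The paper's actual organizing invariant is different: the \emph{reflection index} $r(\M)$, the least $k\ge 1$ with $p^k(x_i)=x_i^{-1}$ for $x_i\in X\cap A_n$ (well-defined via the partially inverting reflection). The heart of the classification is the chain of lemmas showing that $r\le 3$ and then working out $r=1$ (families $(4)$, $(5)$), $r=2$ (families $(2)$, $(3)$) and $r=3$ (family $(6)$) by explicit skew-morphism computations; the impossibility of $r\ge 4$ is a separate short argument. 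None of this is recoverable from the interleaving picture you describe, so the decisive step of your plan would need to be replaced, not merely elaborated.
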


\section{Preliminaries}

Let $G$ be a finite group. Consider a permutation $\varphi$ of $G$
of order $d$ (in the full symmetric group Sym$(G)$) and a function
$\pi$ from $G$ to the cyclic group $\ZZ_d$. The function $\varphi$
is said to be a {\em skew-morphism} of $G$ with the associated {\em
power function} $\pi$, if $\varphi$ fixes the identity element of
$G$ and \[ \varphi(ab)= \varphi(a)\varphi^{\pi(a)}(b) \ \ \ {\rm
for\ all\ \ } a,b\in G.\] Here $\varphi^j$ stands for the
composition $\varphi\circ\dots \circ \varphi$ consisting of $j$
terms.  It was  proved in \cite{JAS} that a Cayley map $CM(G,X,p)$
is regular if and only if there exists a skew-morphism $\varphi$ of
$G$ such that $\varphi(x)= p(x)$ for each $x\in X$. For a given
finite group $G$ with a symmetric generating set $X \subseteq
G\!\setminus\!\{1_G \}$, and a given regular Cayley map ${\cal M} =
CM(G, X, p)$, we will always use $\varphi$ and $\pi$ to denote
respectively the skew-morphism of ${\cal M}$ with the property that
the restriction of $\varphi$ to $X$ is $p$, and the associated power
function. We will use $d$ to denote the valency of ${\cal M}$, and
identify the integers $0, 1, \ldots, d-1$ with their residue classes
modulo $d$ when the context permits. Also, if $p = (x_0,x_1,\dots,
x_{d-1})$, then we will let $c(k) \in \ZZ_d$ be the subscript of
$x_k^{-1}$, and so $x_k^{-1}=x_{c(k)}$ and $c(c(k)) = k$ for all $k
\in \ZZ_d$.  Now for all $k \in \ZZ_d$ we have $1_G =
\varphi(1_G) = \varphi(x_{k}x_{k}^{-1}) =
\varphi(x_{k})\varphi^{\pi(x_k)}(x_{k}^{-1}) =
x_{k+1}\varphi^{\pi(x_k)}(x_{c(k)})$, which implies
$\varphi^{\pi(x_k)}(x_{c(k)}) = x_{k+1}^{-1} = x_{c(k+1)}$; and thus
$\pi(x_k) = c(k+1) - c(k)$ holds in $\ZZ_d$
 for all $k \in \ZZ_d$. It follows from \cite{JAS} that all values of a power function
$\pi$ are non-zero when the skew-morphism associated with $\pi$ is
not equal to the identity mapping. The \emph{kernel} $\ker(\pi)$ of
the power function $\pi$ is defined by $\ker(\pi) = \{g \in G :
\pi(g) = 1\}$.  The following proposition gives some properties of
skew-morphisms.

\begin{prop}{\rm \cite{JAS}} \label{properties-skew}
Let $\varphi$ be a skew-morphism of $G$ with the associated power function $\pi$.
Then
\begin{enumerate}
\item [$(1)$] $\ker(\pi) = \{ g \in G \mid \pi(g)=1 \}$ is a
subgroup of $G$. \item [$(2)$] $\pi(g) = \pi(h)$ if and only if $g$
and $h$ are contained in the same right coset of the subgroup
$\ker(\pi)$.
\item [$(3)$] For any $g,h \in G$, $\pi(gh)=\sum_{i=0}^{\pi(g)-1} \pi(\varphi^i (h))$.
\end{enumerate}
\end{prop}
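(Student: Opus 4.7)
The proof rests on the \emph{iterated skew-morphism identity}
\[
\varphi^{n}(ab) \;=\; \varphi^{n}(a)\,\varphi^{\,s_{n}(a)}(b), \qquad s_{n}(a) \;=\; \sum_{i=0}^{n-1} \pi(\varphi^{i}(a)),
\]
valid for all integers $n \ge 0$ and all $a,b \in G$. I would first verify this by induction on $n$: the base case $n=0$ is trivial, and for the inductive step one applies $\varphi$ to the identity at level $n$, invokes the defining skew-morphism rule on the resulting product, and absorbs $\pi(\varphi^{n}(a))$ into the exponent.

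Given the iterated identity, $(3)$ falls out of evaluating $\varphi$ on both sides of the associativity $(gh)k = g(hk)$. On the one hand $\varphi((gh)k) = \varphi(gh)\,\varphi^{\pi(gh)}(k)$; on the other $\varphi(g(hk)) = \varphi(g)\,\varphi^{\pi(g)}(hk) = \varphi(g)\,\varphi^{\pi(g)}(h)\,\varphi^{\,s_{\pi(g)}(h)}(k) = \varphi(gh)\,\varphi^{\,s_{\pi(g)}(h)}(k)$, where the last step uses the defining identity in reverse. Cancelling the common left factor and using that $\varphi$ has order $d$ gives $\pi(gh) \equiv s_{\pi(g)}(h) \pmod d$, which is the formula claimed. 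Part $(1)$ then follows quickly from $(3)$: substituting $a = 1_G$ in the defining identity yields $\pi(1_G) = 1$ and hence $1_G \in \ker(\pi)$, while closure $\pi(k_1 k_2) = 1$ for $k_1,k_2 \in \ker(\pi)$ and inversion $\pi(k^{-1}) = 1$ for $k \in \ker(\pi)$ both read off directly from $(3)$ applied to $k_1 \cdot k_2$ and to $k \cdot k^{-1}$ respectively. The forward implication of $(2)$ is also immediate: if $g = kh$ with $k \in \ker(\pi)$, then $\pi(g) = \pi(kh) = \pi(h)$ by $(3)$.

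The substantive step is the converse implication of $(2)$: assuming $\pi(g) = \pi(h)$, I need to show $gh^{-1} \in \ker(\pi)$. My plan is to set $y = gh^{-1}$ and $\alpha = \varphi(g)\varphi(h)^{-1}$ and compare $\varphi(gk)$ with $\varphi(hk)$ as $k$ ranges over $G$. Because $\pi(g) = \pi(h)$, the ``shift'' factors $\varphi^{\pi(g)}(k)$ and $\varphi^{\pi(h)}(k)$ are identical, so the defining identity yields $\varphi(gk) = \alpha\,\varphi(hk)$ for every $k$. Reparametrising by $x = hk$, this takes the clean form
\[
\varphi(yx) \;=\; \alpha\,\varphi(x) \qquad \text{for every } x \in G.
\]
Expanding $\varphi(yx) = \varphi(y)\,\varphi^{\pi(y)}(x)$ and specialising to $x = 1_G$ (where $\varphi$ fixes $1_G$) forces $\alpha = \varphi(y)$, and the equation collapses to $\varphi^{\pi(y)} = \varphi$ as permutations of $G$. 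Thus $\varphi^{\pi(y)-1}$ is the identity; combined with the fact quoted in the excerpt from \cite{JAS} that $\pi$ takes values in $\{1,\ldots,d-1\}$ whenever $\varphi$ is non-trivial, this forces $\pi(y) = 1$, i.e.\ $y \in \ker(\pi)$.

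I expect this last manoeuvre to be the only real obstacle. The bookkeeping for $(1)$, $(3)$, and the forward direction of $(2)$ is routine once the iterated identity is in hand, but passing from the single numerical hypothesis $\pi(g) = \pi(h)$ to the functional equation $\varphi(yx) = \alpha\,\varphi(x)$ requires an idea rather than a calculation: it is precisely the point where one exploits that $\varphi$ is a bijection fixing $1_G$, not merely a function satisfying the skew-morphism rule.
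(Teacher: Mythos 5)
The paper does not prove this proposition---it is quoted from Jajcay and \v{S}ir\'a\v{n} \cite{JAS} without proof---so there is nothing internal to compare against; judged on its own, your argument is correct and is essentially the standard one. The iterated identity $\varphi^{n}(ab)=\varphi^{n}(a)\varphi^{s_n(a)}(b)$ and the associativity computation for $(3)$, the specialisations to $1_G\cdot b$, $k_1k_2$ and $kk^{-1}$ for $(1)$, and the functional equation $\varphi(yx)=\alpha\varphi(x)$ forcing $\varphi^{\pi(y)}=\varphi$ for the converse of $(2)$ are all sound; the only cosmetic point is that in the last step you do not actually need the quoted fact that $\pi$ avoids $0$, since $\pi(y)\equiv 1 \pmod{d}$ together with $\pi(y)\in\ZZ_d$ already gives $\pi(y)=1$.
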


Two Cayley maps ${\cal M}_1 = CM(G,X,p)$ and ${\cal M}_2 =
CM(G,Y,q)$ with  $X=\{x_0,x_1, \ldots,$ $x_{d-1}\}$ and
$Y=\{y_0,y_1,\ldots,y_{d-1}\}$ are said to have the {\it same
rotation type} if the two cyclic permutations $p$ and $q$ can be
expressed as  $p = (x_0 \;x_1\; \cdots \; x_{d-1})$ and
$q = (y_0 \;y_1\; \cdots \; y_{d-1})$ such that $x_i^{-1} =
p^{j}(x_i)$ if and only if $y_i^{-1} = q^{j}(y_i)$ for any
 $i,j \in \{0,1,\ldots,d-1\}$.

\begin{prop} {\rm \cite{KKF}} \label{iso}
Let ${\cal M}_1 = CM(G,X,p)$ and ${\cal M}_2 = CM(G,Y,q)$ be two
regular Cayley maps of the same  rotation type. Then ${\cal M}_1 $
and ${\cal M}_2$ are isomorphic if and only if there exists a
group automorphism $\varphi$ of $G$ such that $\varphi(X) = Y$ and $\varphi
p = q \varphi$, i.e., they are equivalent.
\end{prop}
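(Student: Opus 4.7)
The reverse direction ($\Leftarrow$) is the very definition of equivalence, which the introduction has already noted implies isomorphism of the underlying maps. I therefore focus on the forward implication ($\Rightarrow$).

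Let $\Phi\colon\mathcal{M}_1\to\mathcal{M}_2$ be a map isomorphism. Post-composing with the left-multiplication automorphism $L_{\Phi(1_G)^{-1}}\in\Aut(\mathcal{M}_2)$, I may assume $\Phi(1_G)=1_G$. Since $\Phi$ is orientation-preserving and fixes $1_G$, conjugation by $\Phi$ sends the vertex-stabilizer $\langle\varphi_1\rangle\le\Aut(\mathcal{M}_1)$ to $\langle\varphi_2\rangle\le\Aut(\mathcal{M}_2)$ mapping generator to generator, so the induced bijection $\Phi|_X\colon X\to Y$ intertwines $p$ with $q$. Any such intertwiner differs from the index-preserving correspondence $x_i\mapsto y_i$---available because the two maps share a rotation type and hence the same inverse-involution $c$---by a cyclic shift $x_i\mapsto y_{i+k}$. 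Post-composing once more with $\varphi_2^{-k}\in\Aut(\mathcal{M}_2)_{1_G}$, I may normalize to $\Phi(x_i)=y_i$ for every $i\in\mathbb{Z}_d$, while retaining $\Phi(1_G)=1_G$.

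Because both maps use the same cyclic ordering at every vertex, the requirement that $\Phi$ preserve every local rotation produces, at each vertex $g\in G$, a shift $k_g\in\mathbb{Z}_d$ with
\[ \Phi(gx_i) \;=\; \Phi(g)\,y_{\,i+k_g} \qquad \text{for all } i\in\mathbb{Z}_d. \]
The strategy is to show $k_g=0$ for all $g\in G$ by induction on the length of $g$ as a word in $X$. The base case $k_{1_G}=0$ is the normalization. For the inductive step, suppose $k_{g'}=0$ and put $g=g'x_i$, so that $\Phi(g)=\Phi(g')y_i$. Applying the displayed formula to the identity $g\cdot x_i^{-1}=g'$, together with $x_i^{-1}=x_{c(i)}$ and $y_i^{-1}=y_{c(i)}$ (the shared involution $c$ furnished by the same-rotation-type hypothesis), yields
\[ \Phi(g') \;=\; \Phi(g)\,y_{\,c(i)+k_g} \;=\; \Phi(g')\,y_i\,y_{\,c(i)+k_g}, \]
whence $y_{c(i)+k_g}=y_i^{-1}=y_{c(i)}$ and therefore $k_g=0$. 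Iterating, $\Phi(gx_i)=\Phi(g)y_i$ for all $g$ and $i$, and a second induction on word length gives $\Phi(ab)=\Phi(a)\Phi(b)$ for arbitrary $a,b\in G$. Hence $\Phi$ is a group automorphism, and $\Phi(X)=Y$ together with $\Phi p=q\Phi$ are built into the construction, delivering the desired equivalence.

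The main obstacle is this last step: upgrading the combinatorial map isomorphism $\Phi$ to a group homomorphism. A direct attempt to conjugate the regular subgroup $L_G^{(1)}\le\Aut(\mathcal{M}_1)$ onto $L_G^{(2)}\le\Aut(\mathcal{M}_2)$ can fail, since $\Aut(\mathcal{M}_2)$ may contain several vertex-regular subgroups and abstract conjugation by $\Phi$ need not single out $L_G^{(2)}$. The inductive bookkeeping via the shifts $k_g$ circumvents this, and the same-rotation-type hypothesis is precisely what makes the cancellation step succeed: without the matched involution $c$, the cyclic-shift normalization $\Phi(x_i)=y_i$ need not respect inversion and the induction would break.
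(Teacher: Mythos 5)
The paper gives no internal proof of this proposition---it is imported verbatim from \cite{KKF}---and your argument is correct and follows essentially the same standard route: the reverse direction is definitional, and for the forward direction you normalize an (orientation-preserving) map isomorphism to fix $1_G$ and, using the regularity of ${\cal M}_2$ to post-compose with $\varphi_2^{-k}$, to satisfy $\Phi(x_i)=y_i$, then propagate the local shifts $k_g=0$ across the connected Cayley graph. The inductive cancellation $y_i\,y_{c(i)+k_g}=1_G$, hence $c(i)+k_g=c(i)$ by distinctness of the $y_j$, is exactly where the shared inverse-involution $c$ supplied by the same-rotation-type hypothesis is required, and your closing induction upgrading $\Phi(gx_i)=\Phi(g)y_i$ to $\Phi(ab)=\Phi(a)\Phi(b)$ is sound.
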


The following observation will be useful later on.

\begin{prop} {\rm  \cite{CKS}} \label{refle-images}
Let ${\cal M} = CM(G, X, p )$ with $p =
(x_0,x_1,\dots,x_{d-1})$,  be a $d$-valent reflexible Cayley map
for a finite group $G$, and let $\psi$ be an
orientation-reversing automorphism of ${\cal M}$. If $\psi$ takes
the arc $(g,gx_i)$ to the arc $(h,hx_j)$, then $\psi$ takes
$gx_{i}x_{k}$ to $hx_{j}x_{c(j)+c(i)-k}$ for all $k \in \ZZ_d$.
\end{prop}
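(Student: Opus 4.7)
The plan is to analyze how $\psi$ acts on the neighbours of the vertex $gx_i$, by first re-expressing the hypothesis in terms of arcs emanating from $gx_i$, and then exploiting the fact that an orientation-reversing map automorphism carries the rotation at a vertex to the \emph{reverse} rotation at its image vertex.

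First I would record the immediate consequence $\psi(g)=h$ and $\psi(gx_i)=hx_j$. Then I would re-read the arc $(g,gx_i)$ from the viewpoint of its terminal vertex: it is the arc $\bigl(gx_i,(gx_i)\cdot x_i^{-1}\bigr)$, which at the vertex $gx_i$ uses the generator $x_{c(i)}$, since $x_i^{-1}=x_{c(i)}$. Symmetrically, at the vertex $hx_j$ the arc $(hx_j,h)$ uses the generator $x_{c(j)}$, and by hypothesis $\psi$ identifies these two arcs.

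Next I would use the defining property of a Cayley map that the rotation at every vertex is given by the same cyclic permutation $p=(x_0,x_1,\dots,x_{d-1})$. Hence the neighbours of $gx_i$ appear in cyclic order as $\ldots,\,gx_i\cdot x_{c(i)},\,gx_i\cdot x_{c(i)+1},\,gx_i\cdot x_{c(i)+2},\,\ldots$, and analogously around $hx_j$. Because $\psi$ is orientation-reversing and already sends $(gx_i,\,gx_i\cdot x_{c(i)})$ to $(hx_j,\,hx_j\cdot x_{c(j)})$, it must send the $t$-th successor of this starting arc at $gx_i$ to the $t$-th predecessor of the starting arc at $hx_j$. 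In symbols, $\psi\bigl(gx_i\cdot x_{c(i)+t}\bigr)=hx_j\cdot x_{c(j)-t}$ for every $t\in\ZZ_d$.

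Finally I would reindex by setting $k=c(i)+t$, so that $c(j)-t=c(j)+c(i)-k$, yielding the claimed identity $\psi(gx_i\cdot x_k)=hx_j\cdot x_{c(j)+c(i)-k}$ for every $k\in\ZZ_d$. The only (minor) obstacle is bookkeeping: one must keep straight that the same edge corresponds to different generator indices at its two endpoints (namely $i$ at $g$ but $c(i)$ at $gx_i$), and correctly translate ``$\psi$ is orientation-reversing'' into the statement that $\psi$ conjugates the rotation at a vertex into the inverse rotation at its image. Once those conventions are fixed, the result is a one-line reindexing.
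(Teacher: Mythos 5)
Your proof is correct: identifying the reversed arc $(gx_i,g)$ with the generator $x_{c(i)}$ at the vertex $gx_i$ (and likewise $(hx_j,h)$ with $x_{c(j)}$ at $hx_j$), using that every vertex of a Cayley map carries the same rotation $p$, and then applying the defining property of an orientation-reversing automorphism --- that it conjugates the local rotation to its inverse, so the $t$-th successor at $gx_i$ goes to the $t$-th predecessor at $hx_j$ --- gives exactly $\psi(gx_i x_{c(i)+t})=hx_jx_{c(j)-t}$, and the substitution $k=c(i)+t$ yields the stated formula. Note that the paper itself does not prove this proposition but imports it from the reference \cite{CKS}; your argument is the standard one and there is nothing to add.
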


For a given Cayley map ${\cal M} = CM(G, X, p )$, a bijection $\tau$ from $G$ to $G$ is called an {\em antirotary mapping} of ${\cal M}$ if the following three conditions hold
for any $g \in G$ and $x \in X$:
$$ {\rm (i)} \ \tau(1_G)=1_G\ \  {\rm (ii)}  \ \tau(g)^{-1}\tau(gx) \in X \ \ \mbox{and}
\ \ {\rm (iii)} \ \tau(g)^{-1}\tau(gp(x)) = p^{-1}(\tau(g)^{-1}\tau(gx)).$$
In fact, an antirotary mapping of ${\cal M}$ is an orientation-reversing map automorphism fixing $1_G$ and the converse is also true \cite{JA1}. So the following proposition holds.

\begin{prop} {\rm \cite{JA1}}  \label{antirotary}
Let ${\cal M} = CM(G, X, p )$  be a regular Cayley map. Now ${\cal
M}$ is reflexible if and only if there exists an antirotary mapping
of ${\cal M}$.
\end{prop}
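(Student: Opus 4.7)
The plan is to prove both directions separately, leveraging the standard dictionary between orientation-(reversing) map automorphisms on one hand and combinatorial data on rotation systems on the other.

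For the forward direction, I start from the assumption that $\M$ is reflexible, so some orientation-reversing map automorphism $\psi$ of $\M$ exists. Since left multiplication by any element of $G$ is an orientation-preserving map automorphism of the Cayley map, I compose $\psi$ with left multiplication by $\psi(1_G)^{-1}$ to obtain an orientation-reversing map automorphism $\tau$ that fixes $1_G$; this establishes condition (i). Condition (ii) follows because $\tau$ is in particular a graph automorphism of $C(G,X)$: the edge $\{g,gx\}$ at $g$ corresponding to the generator $x$ is carried to an edge at $\tau(g)$, whose associated generator must be $\tau(g)^{-1}\tau(gx)\in X$. For condition (iii), the rotation at every vertex is the same cyclic permutation $p$, and since $\tau$ reverses orientation, the induced map on generators at $\tau(g)$ pulled back via $\tau$ must invert $p$; spelled out, the generator at $\tau(g)$ associated with the $p$-successor $p(x)$ of $x$ is forced to be the $p^{-1}$-successor of the generator associated with $x$, which is precisely the identity in (iii).

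For the converse, given an antirotary mapping $\tau$, condition (ii) together with $\tau$ being a bijection of $G$ shows that $\tau$ maps each edge of $C(G,X)$ to an edge and is therefore a graph automorphism. Condition (iii) then translates exactly to the statement that the local rotation at each vertex $g$, viewed through $\tau$, becomes $p^{-1}$ at $\tau(g)$; thus $\tau$ is a graph automorphism that reverses every local rotation. By the standard correspondence between orientable embeddings and rotation systems, such an automorphism extends uniquely to an orientation-reversing homeomorphism of the supporting surface, and hence defines an orientation-reversing map automorphism of $\M$, proving that $\M$ is reflexible.

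The main obstacle is the careful translation of the geometric condition ``$\tau$ reverses orientation'' into the purely algebraic identity (iii). One must keep track of the fact that the Cayley map uses the same rotation $p$ at every vertex, so the orientation-reversal pulled back through $\tau$ is itself expressed in terms of $p$ at $\tau(g)$; this forces the appearance of $p^{-1}$ on the right-hand side of (iii) rather than some conjugate or twisted variant. Once this bookkeeping is set up cleanly, both directions reduce to formal verifications.
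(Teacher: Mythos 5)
Your proof is correct and follows the same route the paper takes: the paper simply cites [JA1] for the fact that antirotary mappings are exactly the orientation-reversing map automorphisms fixing $1_G$, and uses vertex-transitivity (left translations) to reduce to that case, which is precisely your argument with the details of the cited characterization written out.
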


For any $n$, let $D_n = \left<  a, b \mid a^n = b^2 = 1
~~\mbox{and}~~ab=ba^{-1} \right>$ be the dihedral group of order
$2n$. For our convenience, let
$$
A_n = \{a^i \ | \ i=1,\ldots,n\} \ \ \mbox{and} \ \ B_n = \{ a^ib \ | \ i=1,\ldots,n\}.
$$
Now $A_n$ is the cyclic subgroup of $D_n$ of order $n$. The automorphism group  of $D_n$ is
$$
\Aut(D_n) = \{ \sigma_{i,j} \mid \sigma_{i,j}(a) = a^i, ~ \sigma_{i,j}(b) =
 a^jb, ~~i,j \in \{1,2,\ldots,n \} ~~{\rm and~~} \gcd(i,n) = 1 \}.$$

For any integer $\ell$ with $\gcd(\ell,n)=1$, let $d$ be the
smallest positive integer such that
$$\ell^{d-1}+\ell^{d-2}+\cdots+\ell+1 \equiv 0\;(\mod n).$$ Set
$X=\{b,ab,a^{\ell+1}b,\ldots,a^{\ell^{d-2}+\ell^{d-3}+\cdots+\ell+1}b
\},$ $p=(b, \;ab,\; a^{\ell+1}b, \; \cdots ,\;
a^{\ell^{d-2}+\ell^{d-3}+\cdots+\ell+1}b),$ and $\M(n,\ell)
=CM(D_n,X,p)$ for the resulting regular Cayley map. Wang and
Feng~\cite{WF} showed that all regular balanced Cayley maps on $D_n$
are those of the form $\M(n,\ell),$ where $\gcd(n,\ell)=1$,
and any two such maps on $D_n$ with two integers $\ell_1$ and $\ell_2$ such
that $\gcd(\ell_1,n)=\gcd(\ell_2,n)=1$ are isomorphic if and only
if $\ell_1=\ell_2$.
\medskip

Let $\M = CM(G,X,p)$ be a regular Cayley map. Suppose, in addition,
that there exists a subgroup $N \le G$ such that $N$  is normal in
$G$ and the the set of $N$-cosets is a block system of $\Aut(\M)$.
In what follows it will be simply said that $G/N$ is a block system
for $\Aut(\M)$. Furthermore, we set $X/N = \{ N x \mid x \in X \}$.
Clearly, $X/N$ is a generating subset of the factor group $G/N$ and
$X/N = (X/N) ^{-1}$.
 Also, since $C(G,X)$ is $\Aut(\M)$-arc-regular, no element of $X$ belongs to $N$, and so
$1_{G/N} \notin X/N$.

There is an action of $\Aut(\M)$ on the set of blocks, i.\ e., on
$G/N$. For $g \in \Aut(\M),$ we let $g^{G/N}$ denote the action of
$g$ on $G/N$, and for a subgroup $H \le \Aut(\M)$ set $H^{G/N} =\{
g^{G/N} \mid g \in H \}$. For $g \in G,$ let $L_g$ be the \emph{left
translation} $L_g : x \mapsto g x, x \in G,$ and for a subgroup $H
\le G,$ let $L(H)=\{L_h \mid h\in H \}$. Notice that $(L_g)^{G/N} =
L_{N g}$ for every $g \in G$. Let us write  $X = \{x_0,
x_1,\ldots,x_{d-1}\}$ and $p = (x_0,x_1,\ldots,x_{d-1})$. Then it
follows that the cycle $p^{G/N} : = (x_0 N,x_1
N,\ldots,x_{d-1} N)$ is well-defined (see \cite{KMM}); and so is
the Cayley map $CM(G/N,X/N,p^{G/N})$. The latter map is called  the
\emph{quotient} of $\M$ with respect to the block system $G/N,$ and
it will be also denoted by $\M/N$. We note that  the quotient map
$\M/N$ coincides with the so called \emph{Cayley-quotient} induced
by the normal subgroup $N$ which was defined by Zhang \cite{Z}, and
in the same paper $\M$ is also referred to as the
\emph{Cayley-cover} of $\M/N$.
 We collect below some properties (see \cite[Corollary~3.5]{KMM}):

\begin{prop}\label{quo}
Let $\M = CM(G,X,p)$ be a regular Cayley map with associated
skew-morphism $\psi$ and power function $\pi,$ and let $N \le  G$ be
a normal subgroup in $G$ and $G/N$ is a block system for $\Aut(\M)$.
Then the following hold:
\begin{enumerate}[(1)]
\item $\M/N = CM(G/N,X/N,p^{G/N})$ is also regular.
\item $\Aut(\M/N) = \Aut(\M)^{G/N}$.
\item The skew-morphisms associated  with $\M/N$ is equal to $\psi^{G/N}$.
\item The order
$|\langle \psi \rangle| \le |N| \cdot \big|\langle \psi^{G/N}
\rangle\big|,$  and equality holds if and only if $X$ is a union of
$N$-cosets.
\item The power function $\pi^{G/N}$ associated with $\M/N$ satisfies
$$
\pi^{G/N}(Ng) \equiv \pi(g)\big(\text{mod } \big|\langle \psi^{G/N}
\rangle\big| \big) \text{ for every } g \in G.
$$
\end{enumerate}
\end{prop}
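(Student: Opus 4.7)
The plan is to prove (1)--(5) jointly, with the quotient permutation $\psi^{G/N}: Ng \mapsto N\psi(g)$ as the central object. As a preliminary, $\psi$ permutes the $N$-cosets of $G$, because the rotation in $\Aut(\M)_{1_G}$ acting on vertices as $\psi$ belongs to $\Aut(\M)$, which respects the block system $G/N$, and it fixes the block $N \ni 1_G$. Hence $\psi^{G/N}$ is a well-defined permutation of $G/N$ fixing $N$.

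I would then verify that $\psi^{G/N}$ is a skew-morphism of $G/N$ by the direct computation
\begin{equation*}
\psi^{G/N}(Ng \cdot Nh) = N\psi(gh) = N\psi(g)\,\psi^{\pi(g)}(h) = \psi^{G/N}(Ng)\cdot(\psi^{G/N})^{\pi(g)}(Nh),
\end{equation*}
using $(\psi^{G/N})^k(Nh) = N\psi^k(h)$. Setting $m = |\langle \psi^{G/N}\rangle|$, define $\pi^{G/N}(Ng) := \pi(g) \bmod m$. To check well-definedness on $G/N$, I would first establish $\pi(n)\equiv 1 \pmod m$ for every $n \in N$: the identity $\psi(nh) = \psi(n)\psi^{\pi(n)}(h)$ with $\psi(n) \in N$, combined with $\psi^{G/N}(Nnh) = N\psi(h)$, gives $(\psi^{G/N})^{\pi(n)-1}(Nh) = Nh$ for all $h$, so $m \mid \pi(n)-1$. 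Then Proposition~\ref{properties-skew}(3) applied to $g_1 = g_2 n'$ with $n' = g_2^{-1}g_1 \in N$ (by normality) yields
\begin{equation*}
\pi(g_1) = \sum_{i=0}^{\pi(g_2)-1} \pi(\psi^i(n')) \equiv \pi(g_2) \pmod m,
\end{equation*}
since each $\psi^i(n') \in N$ contributes $1$ modulo $m$. This proves (3) and (5). Since $\psi^{G/N}$ restricts to $p^{G/N}$ on $X/N$, the Jajcay--\v{S}ir\'a\v{n} characterization of regular Cayley maps gives regularity of $\M/N$, which is (1). For (2), the projection sends arcs of $\M$ surjectively onto arcs of $\M/N$, so $\Aut(\M)^{G/N}$ acts arc-transitively on $\M/N$; since $\Aut(\M/N)$ is arc-regular by (1), any arc-transitive subgroup must coincide with it.

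Statement (4) is the main obstacle. The surjective homomorphism $\langle\psi\rangle \twoheadrightarrow \langle\psi^{G/N}\rangle$ has kernel $\langle\psi^m\rangle$, so (4) reduces to showing $|\langle\psi^m\rangle| \le |N|$, with equality iff $X$ is a union of $N$-cosets. For the bound: each element of $\langle\psi^m\rangle$ fixes every $N$-coset setwise, so every cycle of $\psi^m$ on $G$ lies in a single coset and has length dividing $|N|$; hence $|\langle\psi^m\rangle|$, being the least common multiple of these cycle lengths, divides $|N|$. For the equality clause, I would track the $\psi^m$-orbit through an element $x \in X$, which has length $d/\gcd(d,m)$ and lies in $Nx \cap X$. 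Assuming $X$ is a union of $N$-cosets, $p^{G/N}|_{X/N}$ is a $d'$-cycle with $d' = |X/N|$, so the $\psi$-cycle $(x_0,\dots,x_{d-1})$ passes through each of the $d'$ cosets $|N|$ times with spacing $d'$; tracing through then shows that the $\psi^m$-orbit fills $Nx$ and $|\langle\psi^m\rangle|=|N|$. Conversely, if equality holds then the $\psi^m$-orbits partitioning $X$ (all of size $d/\gcd(d,m)$ by arc-regularity of $\langle\psi\rangle$ on $X$) must each realise the maximum cycle length $|N|$, forcing $Nx \cap X = Nx$ for every $x \in X$. The delicate point I expect is justifying that the cycle of length $|N|$ realising $|\langle\psi^m\rangle|$ must occur on $X$ rather than elsewhere in $G$; I would handle this using that $X$ generates $G$ together with the skew-morphism identity, which propagates the cycle structure of $\psi^m$ from $X$ to arbitrary products.
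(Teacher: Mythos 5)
The paper offers no proof of this proposition at all: it is quoted verbatim from \cite[Corollary~3.5]{KMM}, so there is no internal argument to compare against, and your attempt has to stand on its own. Most of it does. The verification that $\psi$ permutes the $N$-cosets, the computation showing $\psi^{G/N}$ is a skew-morphism, the reduction $\pi(n)\equiv 1\ (\mathrm{mod}\ m)$ for $n\in N$ via $(\psi^{G/N})^{\pi(n)-1}=\mathrm{id}$, and the use of Proposition~\ref{properties-skew}(3) to get well-definedness of $\pi^{G/N}$ and the congruence in (5) are all correct and are the standard route. Part (2) is slightly under-argued (you use that $\Aut(\M)^{G/N}\le\Aut(\M/N)$, i.e.\ that induced permutations of the blocks preserve the quotient rotation system, before invoking arc-regularity), but this is a routine verification.

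The genuine flaw is in your bound for (4). You claim that every cycle of $\psi^m$ lies inside an $N$-coset and therefore ``has length dividing $|N|$,'' whence the lcm of the cycle lengths divides $|N|$. Neither implication holds: a permutation of a set of size $|N|$ can have cycles of any length up to $|N|$, not only divisors of $|N|$, and an lcm of numbers each at most $|N|$ can exceed $|N|$ (cycles of lengths $2$ and $3$ inside cosets of size $4$ give order $6$). The statement you are reducing to, $|\langle\psi^m\rangle|\le|N|$, is true but needs a different justification. The cleanest one bypasses $\psi^m$ entirely: for a regular Cayley map the skew-morphism has order equal to the valency and $X$ is a single $\langle\psi\rangle$-orbit, so $|\langle\psi\rangle|=|X|$ and $|\langle\psi^{G/N}\rangle|=|X/N|$; the inequality is then just the count $|X|\le|N|\cdot|X/N|$, and equality holds precisely when every coset $Nx$ with $x\in X$ satisfies $Nx\subseteq X$, which is the stated condition. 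If you insist on your route, the repair is to note that $|\langle\psi^m\rangle|=d/m$ is realised by the cycle of $\psi^m$ through any point of $X$ (because $X$ is a full-length $d$-cycle of $\psi$ and $m\mid d$), and that cycle is contained in $Nx\cap X$, giving $d/m\le|N|$; the same observation, that $\psi^m$ partitions $Nx\cap X$ into orbits of equal length $d/m$, settles the equality clause and removes the ``delicate point'' you flag at the end.
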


Recall that, the \emph{core} of a subgroup $A \le B$ in the group $B$ is
the largest normal subgroup of $B$ contained in $A$. We say that $A$ is
\emph{core-free} in $B$ if its core in $B$ is trivial.
We will use the following classification result proved in \cite{KK}.

\begin{theorem}{\rm \cite[Theorem~3.4]{KK}}\label{core-free}
 Let $\M$ be a regular Cayley map for the dihedral group
$D_n$ with $n>4$ such that cyclic subgroup $L(A_n)$ is core-free in
$\Aut(\M)$. Then $n = 2m,$ $m$ is an odd number, and
$\M$ is equivalent to the Cayley map $CM(D_n, a\langle a^2\rangle
\cup  b\langle a^2 \rangle, p)$, where $ p=(b, a, a^2b, a^3, a^4b,
\ldots , a^{n-2}b, a^{n-1}).$
\end{theorem}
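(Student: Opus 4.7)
The plan is to translate the core-freeness of $L(A_n)$ into constraints on the skew-morphism $\varphi$ underlying $\M$ and then to identify $n$ and $p$ up to equivalence.

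First I would reformulate the hypothesis. Let $A=\Aut(\M)$ and let $\pi$ denote the power function of $\varphi$. Since every subgroup of the cyclic group $A_n$ is normal in $D_n$, $L(N)\trianglelefteq L(D_n)$ for every $N\le A_n$. The identity $\varphi L_g\varphi^{-1}=L_{\varphi(g)}\varphi^{\pi(g)-1}$ (obtained by evaluating both sides at $1_G$ and using the skew-morphism rule) then shows $L(N)\trianglelefteq A$ iff $\varphi(N)=N$ and $N\subseteq\ker\pi$. Hence the core of $L(A_n)$ in $A$ is $L(N^*)$, where $N^*$ is the largest $\varphi$-invariant subgroup of $K:=A_n\cap\ker\pi$, and the hypothesis becomes $N^*=1$. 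I would then dispose of the subcase $\varphi(A_n)=A_n$: if $\varphi$ preserves $A_n$, then the $\varphi$-orbit $X$ lies entirely in $A_n$ or entirely in $B_n$. The former contradicts $\langle X\rangle=D_n$. In the latter, every $x\in X$ is an involution, so $\varphi(x\cdot x)=\varphi(1)=1$ combined with distinctness of the $x_k$ forces $\pi(x)=1$ for all $x\in X$; thus $X\subseteq\ker\pi$, whence $\ker\pi=D_n$ and $\varphi$ is a group automorphism of $D_n$. But then every characteristic subgroup of $D_n$ lies in $N^*$, contradicting $N^*=1$.

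With $\varphi(A_n)\neq A_n$ and $X$ consequently meeting both $A_n$ and $B_n$, I would next argue that $p$ alternates between $A_n$- and $B_n$-entries. If $x_k,x_{k+1}\in B_n$ then, since $B_n$-elements are involutions, $c(k)=k$ and $c(k+1)=k+1$, so $\pi(x_k)=1$ and $x_kx_{k+1}\in A_n\cap\ker\pi=K$, forcing $K\neq 1$. Propagating this observation across runs of consecutive same-type entries, and combining the resulting forced elements of $K$ with $N^*=1$ via a parallel but more delicate argument for $A_n$-runs (using $X=X^{-1}$ and Proposition~\ref{properties-skew}(3) to control $c(k)$ for $A_n$-entries), rules out any run of length $\ge 2$. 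Hence $p$ strictly alternates, $d$ is even, and $|X\cap A_n|=|X\cap B_n|=d/2$. After applying an automorphism of $D_n$ via Proposition~\ref{iso}, I may further assume $x_0=b$ and $x_1=a$.

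The alternation, the recurrence $\pi(x_k)=c(k+1)-c(k)$, the symmetry $X=X^{-1}$, and the dihedral law $bab=a^{-1}$ now pin down $x_{2j}=a^{2j}b$ and $x_{2j+1}=a^{2j+1}$ by an easy induction on $j$, so $d=n=2m$ and $p=(b,a,a^2b,a^3,\ldots,a^{n-2}b,a^{n-1})$ as in the statement. To force $m$ odd I would invoke $N^*=1$ one last time: when $m$ is even, a direct computation via Proposition~\ref{properties-skew}(3) (writing $a^m=a\cdot a^{m-1}$ and summing $\pi$ over the relevant initial segment of the $\varphi$-orbit of $a^{m-1}$) yields $\varphi(a^m)=a^m$ and $\pi(a^m)=1$, so $\langle a^m\rangle\subseteq K$ is $\varphi$-invariant and nontrivial, a contradiction; hence $m$ is odd. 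Uniqueness up to equivalence is then immediate from Proposition~\ref{iso}. The main obstacle is the second step: the clean local observation that two consecutive $B_n$-entries produce nontrivial elements of $K$ must be promoted into a global alternation pattern, and this requires carefully controlling how the forced subgroup generated by these elements of $K$ sits under the action of $\varphi$, so as to violate $N^*=1$ in every non-alternating configuration.
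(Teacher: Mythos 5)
The paper does not prove this statement; it is imported verbatim from \cite[Theorem~3.4]{KK}, so there is no in-paper proof to compare against and your proposal has to stand on its own. Your opening reduction is sound: the identity $\varphi L_g\varphi^{-1}=L_{\varphi(g)}\varphi^{\pi(g)-1}$ does show that $L(N)\trianglelefteq\Aut(\M)$ iff $N\trianglelefteq D_n$, $\varphi(N)=N$ and $N\subseteq\ker\pi$, the core of $L(A_n)$ is $L(N^*)$ as you describe, and the disposal of $X\subseteq A_n$ and $X\subseteq B_n$ (the balanced case) is correct. The final parity argument for $m$ odd is also fine \emph{once the map has been pinned down}. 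The problem is that the entire middle of the proof -- which is where all the work lies -- is not actually carried out, and the parts of it you do state contain errors.

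Concretely: (a) your local observation is off by one. Two consecutive $B_n$-entries $x_k,x_{k+1}$ give only $\pi(x_k)=c(k+1)-c(k)=1$, i.e.\ $x_k\in\ker(\pi)$; to conclude $x_kx_{k+1}\in K$ you also need $\pi(x_{k+1})=1$, which requires control of $x_{k+2}$ (a run of length three, not two). (b) Even when you do manufacture nontrivial elements of $K=A_n\cap\ker(\pi)$, this does not contradict the hypothesis: $N^*=1$ only forbids a nontrivial $\varphi$-\emph{invariant} subgroup of $K$, and the target map itself has $a^m\in K$ (with $\varphi(a^m)=a^{m+1}b\notin A_n$), so $K\neq 1$ is fully consistent with core-freeness. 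You acknowledge that promoting forced elements of $K$ to a $\varphi$-invariant subgroup is "the main obstacle," but that obstacle is precisely the theorem; deferring it leaves the proof without its core. (c) The claimed "easy induction" yielding $x_{2j}=a^{2j}b$, $x_{2j+1}=a^{2j+1}$ from alternation, the recurrence for $\pi$, $X=X^{-1}$ and $bab=a^{-1}$ is false as stated: the map ${\cal M}_3$ of Theorem~\ref{main-result}, with $p_3=(b,a,a^{m+2}b,a^3,\ldots)$, is a regular Cayley map satisfying all of those local constraints with $x_0=b$, $x_1=a$ but $x_2=a^{m+2}b\neq a^2b$. So core-freeness must be invoked again, nontrivially, to exclude such twisted alternating solutions. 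Finally, normalizing to $x_1=a$ presupposes that $X\cap A_n$ contains a generator of $A_n$; in this paper that is Lemma~\ref{cyclic-generator}, whose proof itself rests on Theorem~\ref{core-free}, so you would need an independent justification to avoid circularity.
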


\begin{remark}\label{small-maps}
 For small positive integers $n$ with $2 \le n \le 4$,
the regular Cayley maps $\M$ on $D_n$ such that cyclic subgroup
$L(A_n)$ is core-free in $\Aut(\M)$ can be easily obtained (see also
\cite[Theorem~2.8]{KMM}). Up to equivalence, these are the following
maps:
\begin{itemize}
\item $CM(D_2,\{b,ab, a\},(b,ab, a));$
\item $CM(D_3,\{a^{-1},a,b,a^2b\},(a^{-1},a,b,a^2b));$
\item $CM(D_4,\{b,a,a^{-1}\},(b, a, a^{-1}))$.
\end{itemize}
\end{remark}

We will also make use of two further results from \cite{KK}.

\begin{prop}{\rm \cite[Corollary~4.2]{KK}}\label{half-kernel}
Let $\M= CM(D_n,X, p)$ be a regular Cayley map with associated
skew-morphism $\psi$ and power function $\pi$ and let $M = A_n \cap \ker(\pi)$.
If $|M| > 2,$ then $L(M)$ is normal in $\Aut(\M)$.
\end{prop}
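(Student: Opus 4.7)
The plan is to show that $L(M)$ is normalized by each of the two natural generating subgroups of $\Aut(\M)$: the regular subgroup $L(D_n)$ of left translations and the vertex-stabilizer $\langle \psi \rangle$ at $1_G$. Since $\M$ is a regular Cayley map we have $\Aut(\M) = L(D_n) \cdot \langle \psi \rangle$, so these two normalization conditions will suffice.

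For normalization by $L(D_n)$, the identity $L_g L_m L_g^{-1} = L_{g m g^{-1}}$ reduces matters to showing $M \trianglelefteq D_n$. This is immediate: $M$ is a subgroup of the cyclic group $A_n$, hence characteristic in $A_n$, and $A_n$ is itself normal in $D_n$. For normalization by $\psi$, a direct application of the defining identity $\psi(ab) = \psi(a)\psi^{\pi(a)}(b)$ gives $\psi \circ L_g = L_{\psi(g)} \circ \psi^{\pi(g)}$, so that
$$
\psi L_g \psi^{-1} = L_{\psi(g)} \, \psi^{\pi(g)-1}.
$$
In particular, for $m \in M \subseteq \ker(\pi)$ we have $\pi(m) = 1$, so $\psi L_m \psi^{-1} = L_{\psi(m)}$. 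Thus everything reduces to the single key containment $\psi(M) \subseteq M$.

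This is where I expect the main work to lie, and I would argue it in two steps. First, I would observe that $\psi\!\upharpoonright\! M$ is a genuine group homomorphism into $D_n$: for $m_1, m_2 \in M$, since $\pi(m_1) = 1$, the skew-morphism identity collapses to $\psi(m_1 m_2) = \psi(m_1)\psi(m_2)$. As $\psi$ is a bijection of $D_n$, the restriction $\psi\!\upharpoonright\! M$ is injective, so $\psi(M)$ is a subgroup of $D_n$ isomorphic to $M$, hence cyclic of order $|M| > 2$. Second, I would use the structure of $D_n$: every element of $B_n$ is an involution, so any cyclic subgroup of $D_n$ of order strictly greater than $2$ must be contained in $A_n$. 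This forces $\psi(M) \subseteq A_n$, and it is precisely here that the hypothesis $|M| > 2$ is used.

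To finish, both $\psi(M)$ and $M$ are subgroups of the cyclic group $A_n$ of the same order $|M|$, and in a cyclic group there is a unique subgroup of each divisor order, so $\psi(M) = M$. Combining this with the reduction above, $L(M)$ is normalized by $L(D_n)$ and by $\psi$, and hence is normal in $\Aut(\M) = \langle L(D_n), \psi \rangle$. The only nontrivial point, and the one to be highlighted, is the passage $\psi(M) \subseteq A_n$ via the involution obstruction in $B_n$; once this is in hand, the remaining assertions are formal.
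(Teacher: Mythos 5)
Your proof is correct. Note that the paper itself gives no proof of this statement --- it is imported verbatim from \cite[Corollary~4.2]{KK} --- so there is nothing in the present text to compare against; but your argument is sound and self-contained: the decomposition $\Aut(\M)=L(D_n)\langle\psi\rangle$, the conjugation identity $\psi L_g\psi^{-1}=L_{\psi(g)}\psi^{\pi(g)-1}$, and the reduction to $\psi(M)=M$ via the fact that every element of $B_n$ is an involution (which is exactly where $|M|>2$ enters) are all correctly executed, and the final step using the uniqueness of subgroups of a given order in the cyclic group $A_n$ closes the argument.
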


\begin{theorem}{\rm \cite[Theorem~4.5]{KK}}\label{kernel-at-least-4}
Let $n \ge 2,$ and $\M = CM(D_n,X,p)$ be a regular Cayley map with
associated power function $\pi$. Then either $n = 3,$ $\M$ is the embedding of the octahedron into the sphere and $|\ker(\pi)| = 2,$ or $|\ker(\pi)| \ge 4$.
\end{theorem}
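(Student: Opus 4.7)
Writing $K = \ker(\pi)$ and $d = |\psi|$, the plan is to rule out $|K| \in \{1, 2, 3\}$, with the octahedral case as the unique exception emerging at the very end. The easy base is $|K| = 1$: since $\pi$ is constant on right cosets of $K$ by Proposition~\ref{properties-skew}(2) and takes only nonzero values in $\ZZ_d$, one has $|D_n|/|K| \le d-1$, so $|K| = 1$ forces $d \ge 2n + 1$; but the orbit $X$ of $\psi$ is a subset of $D_n \setminus \{1\}$ of size exactly $d$, so $d \le 2n - 1$, a contradiction. For $|K| \in \{2, 3\}$, the subgroup structure of $D_n$ narrows $K$ to one of $\langle a^{n/3}\rangle$ (for $3 \mid n$), $\langle a^{n/2}\rangle$ (for $n$ even), or $\langle a^i b\rangle$ for some $i$; the first two are treated by an inductive quotient argument, and the third via the core-free classification of Theorem~\ref{core-free}.

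For the rotation cases $K = \langle a^{n/3}\rangle$ and $K = \langle a^{n/2}\rangle$, $K$ is normal in $D_n$ and I proceed by induction on $n$. Taking $|K| = 3$ first: $|A_n \cap K| = 3 > 2$, so Proposition~\ref{half-kernel} gives $L(K) \triangleleft \Aut(\M)$, whence $D_n/K \cong D_{n/3}$ is a block system and Proposition~\ref{quo} supplies the quotient map $\M/K$, itself a regular Cayley map for $D_{n/3}$. The preimage of $\ker(\pi^{G/K})$ in $D_n$ is a union of cosets of $K$ of total size $|K| \cdot |\ker(\pi^{G/K})|$; it also equals the $\pi$-preimage of those elements of $\ZZ_d$ reducing to $1$ modulo $|\psi^{G/K}|$, of which there are at most $d/|\psi^{G/K}|$ values. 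Combining these counts with the bound $d/|\psi^{G/K}| \le |K|$ from Proposition~\ref{quo}(4) yields $|\ker(\pi^{G/K})| \le |K| = 3$, contradicting the inductive conclusion $|\ker(\pi^{G/K})| \ge 4$ for all but finitely many small $n$, which are dispatched by direct inspection. The centre case $K = \langle a^{n/2}\rangle$ is treated in the same way once one verifies that $K$ is $\psi$-invariant (since $\pi \equiv 1$ on $K$, the skew-morphism law specialises to $\psi(k_1 k_2) = \psi(k_1)\psi(k_2)$, forcing $\psi(K) \le K$) and hence $L(K)$ is normal in $\Aut(\M)$.

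The remaining case $K = \langle a^i b\rangle$ is the heart of the argument. Here $K$ is not normal in $D_n$ for $n \ge 3$, so the quotient route is unavailable; instead, I show that $L(A_n)$ must be core-free in $\Aut(\M)$. For any subgroup $N \le D_n$ with $L(N) \triangleleft \Aut(\M)$, the conjugate $\psi L_m \psi^{-1}$ is again a left translation $L_{m'}$ for some $m' \in N$; evaluating at $1$ gives $m' = \psi(m)$, and comparing at arbitrary $g$ with the skew-morphism identity $\psi(mg) = \psi(m)\psi^{\pi(m)}(g)$ forces $\pi(m) = 1$, so $N \subseteq K$. Applied to the core $N$ of $L(A_n)$ in $\Aut(\M)$, this gives $N \le K \cap A_n = \{1\}$, so $L(A_n)$ is core-free. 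Theorem~\ref{core-free} then pins $\M$ down explicitly for $n > 4$, and a direct computation of the associated power function on that map yields $|\ker \pi| \ge 4$, contradicting $|K| = 2$. For $n \in \{2,3,4\}$, Remark~\ref{small-maps} lists the three core-free maps; inspection shows that only the octahedron $CM(D_3, \{a^{-1},a,b,a^2b\}, (a^{-1},a,b,a^2b))$ has $|\ker \pi| = 2$, which is precisely the stated exception. The main obstacle is exactly this reflection sub-case: without a normal kernel one must invoke the full strength of the core-free classification, and the kernel of the $n > 4$ map from Theorem~\ref{core-free} must be computed directly to close the argument.
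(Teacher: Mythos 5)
First, a caveat: this paper does not prove Theorem~\ref{kernel-at-least-4} at all --- it is imported verbatim from \cite[Theorem~4.5]{KK}, so there is no in-paper proof to measure you against. Judged on its own terms, your skeleton is sensible: the counting argument for $|K|=1$ is correct (the $2n/|K|$ distinct, nonzero values of $\pi$ force $2n/|K|\le d-1\le 2n-2$), and the reflection branch $K=\langle a^ib\rangle$ is handled cleanly and correctly --- the observation that $\psi L_m\psi^{-1}=L_{\psi(m)}\psi^{\pi(m)-1}$ forces any $N$ with $L(N)\triangleleft\Aut(\M)$ into $\ker(\pi)$, hence core-freeness of $L(A_n)$, hence Theorem~\ref{core-free} and Remark~\ref{small-maps}, with the octahedron emerging as the unique exception, is exactly the right move.

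There is, however, a genuine gap in the central case $K=\langle a^{n/2}\rangle$. You claim that $\pi\equiv 1$ on $K$ gives $\psi(k_1k_2)=\psi(k_1)\psi(k_2)$ and that this ``forces $\psi(K)\le K$.'' It does not: it only shows that $\psi(K)$ is \emph{some} subgroup of order $2$, i.e.\ that $\psi(a^{n/2})$ is an involution, and $D_n$ has $n+1$ involutions when $n$ is even. Your own (correct) criterion from the reflection branch shows that $L(K)\triangleleft\Aut(\M)$ requires $K\subseteq\ker(\pi)$ \emph{and} $\psi(K)=K$, and the second condition is exactly what is missing. It can genuinely fail for a central involution lying in the kernel: for the map $\M_2$ of Theorem~\ref{main-result} one has $a^m\in\ker(\pi_2)$ but $\varphi_2(a^m)=a^{m+1}b\notin\langle a^m\rangle$. (That example has $|\ker(\pi_2)|=4$, so it does not refute the theorem, but it does refute your inference.) Without $\psi(K)=K$ the block system $D_n/K$ and the quotient map of Proposition~\ref{quo} are unavailable, so this branch is not closed; you would need a separate argument that $\ker(\pi)=\langle a^{n/2}\rangle$ forces $\psi$ to fix $a^{n/2}$, or a different route entirely (e.g.\ reducing to the core-free case here as well). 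A second, smaller gap: in the $|K|=3$ branch the induction leaves the exceptional possibilities $n=3$ (where $K=A_3$ and the quotient is not dihedral) and $n=9$ (where the quotient could a priori be the octahedron with kernel of order $2\le 3$), and ``dispatched by direct inspection'' is an assertion, not an argument --- the $n=9$ case in particular requires actually ruling out a $\ZZ_3$-cover of the octahedral map with kernel $\langle a^3\rangle$.
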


Finally, we will need the following result about skew-morphisms of cyclic groups.

\begin{prop}{\rm \cite[Corollary~3.3]{KN}}\label{KN}
Let $\varphi$ be an arbitrary skew-morphism of the cyclic group $\ZZ_n$, and $T$ be an orbit
of  $\left< \varphi \right>$  such that $T$ generates $\ZZ_n$.
Then $T$ contains a generator of $\ZZ_n$.
\end{prop}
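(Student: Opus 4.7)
The plan is to reduce Proposition~\ref{KN} to the following stronger structural claim about skew-morphisms of cyclic groups: \emph{for every skew-morphism $\varphi$ of $\ZZ_n$ and every $a \in \ZZ_n$, one has $\gcd(\varphi(a), n) = \gcd(a, n)$}—equivalently, $\varphi$ sends each subgroup of $\ZZ_n$ into itself. Granted this claim, the proposition is immediate: letting $c = \gcd(t, n)$ for an arbitrary $t \in T$, the claim shows that $c$ is an invariant of the orbit, so $T \subseteq c\ZZ_n$ and hence $\langle T\rangle \le c\ZZ_n$. The hypothesis $\langle T\rangle = \ZZ_n$ then forces $c = 1$, so every element of $T$ is a unit of $\ZZ_n$, and in particular a generator.

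To prove the structural claim I would induct on $n$, using the subgroup $K = \ker(\pi)$ as the inductive handle. Using Proposition~\ref{properties-skew} together with an elementary argument (the restriction $\varphi|_K$ is additive because $\pi|_K \equiv 1$, its image is a subgroup of $\ZZ_n$ of order $|K|$, and in the cyclic group $\ZZ_n$ there is only one such subgroup), one shows that $K$ is $\varphi$-invariant and that $\varphi|_K$ is an ordinary group automorphism of $K$. A further short check with the skew-morphism identity shows that $\varphi$ permutes the cosets of $K$ and induces a skew-morphism $\bar\varphi$ on the cyclic quotient $\ZZ_n/K \cong \ZZ_{n/|K|}$. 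Automorphisms of cyclic groups trivially preserve all subgroups, and by induction $\bar\varphi$ preserves all subgroups of $\ZZ_n/K$. Combining these two pieces of information via the skew-morphism identity (Proposition~\ref{properties-skew}(3)) should then upgrade to preservation of \emph{every} subgroup of $\ZZ_n$ by $\varphi$.

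The main obstacle is precisely this last combination step. Subgroups $H \le \ZZ_n$ that contain $K$ correspond to subgroups of the quotient $\ZZ_n/K$ and are handled immediately; the difficulty lies in subgroups $H$ \emph{not} containing $K$, for which $\varphi(H)$ cannot be read off directly from information about $\bar\varphi$ and $\varphi|_K$. In this case one has to track how $\varphi$ acts on a single coset of $K$—not merely modulo $K$—through the power-function formula in Proposition~\ref{properties-skew}(3), and verify that the gcd with $n$ is preserved. A more expedient route, and presumably the one taken in \cite{KN}, is first to establish an explicit parametric description of all skew-morphisms of $\ZZ_n$ (the main theorem of \cite{KN}, of which Proposition~\ref{KN} is Corollary~3.3) and then read the subgroup-preservation claim off from that explicit form.
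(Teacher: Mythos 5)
First, a point of orientation: the paper does not prove Proposition~\ref{KN} at all --- it is imported verbatim from \cite[Corollary~3.3]{KN} --- so there is no internal argument to compare yours against, and your attempt must stand on its own. It does not, because the structural claim to which you reduce everything, namely that $\gcd(\varphi(a),n)=\gcd(a,n)$ for every skew-morphism $\varphi$ of $\ZZ_n$ (equivalently, that $\varphi$ maps every subgroup into itself), is false. Take $n=6$ and let $\varphi$ fix $0,2,4$ and act as the $3$-cycle $(1\ 3\ 5)$, with power function $\pi\equiv 1$ on $\{0,2,4\}$ and $\pi\equiv 2$ on $\{1,3,5\}$. A direct check of all pairs confirms $\varphi(a+b)=\varphi(a)+\varphi^{\pi(a)}(b)$, so this is a genuine skew-morphism; it is in fact the one underlying the antibalanced regular embedding of $K_{3,3}$ viewed as a Cayley map over $\ZZ_6$, one of the standard examples in this area. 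Here $\varphi(1)=3$, so $\gcd(\varphi(1),6)=3\neq 1=\gcd(1,6)$, and the subgroup $\{0,3\}$ is not preserved since $\varphi(3)=5$. The generating orbit $\{1,3,5\}$ does contain a generator, as the proposition asserts, but it meets two distinct gcd classes, so the orbit-invariance of $\gcd(t,n)$ on which your entire reduction rests simply is not available.

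The obstacle you flagged at the end --- subgroups $H$ not containing $K=\ker(\pi)$ --- is therefore not a technical inconvenience to be overcome but the exact point where the claim fails: in the example, $H=\{0,3\}$ meets $K=\{0,2,4\}$ trivially and is moved by $\varphi$. Your proposed fallback, reading subgroup-preservation off an explicit parametrization of skew-morphisms of $\ZZ_n$, cannot succeed either, since no parametrization can certify a false statement. The partial facts you do establish (that $\ker(\pi)$ is $\varphi$-invariant, that $\varphi$ restricts to an automorphism there, and that $\varphi$ induces a skew-morphism of the quotient) are correct but do not come close to the proposition; a correct proof has to proceed differently, as in \cite{KN}, where the statement is derived as a corollary of their structural results on skew-morphisms of cyclic groups rather than from any gcd-preservation property.
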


\section{The reflection index}

As set in the previous section, the dihedral group $D_n$ of order $2n$ is given by the presentation
$D_n=\left< a,b \mid a^n=b^2=1, bab=a^{-1} \right>,$  $A_n=\left< a \right>$
and $B_n = D_n \setminus A_n$.

\begin{lemma}\label{generating-set}
Let $\M=CM(D_n,X,p)$ be a regular Cayley map for the dihedral group $D_n$.
If $X \cap A_n \ne \emptyset$ then $A_n=\left< X \cap A_n \right>$.
\end{lemma}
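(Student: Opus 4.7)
Set $H = \langle X \cap A_n \rangle$, which is a subgroup of the cyclic group $A_n$ and hence normal in $D_n$. The aim is to prove $H = A_n$ whenever $X \cap A_n$ is non-empty. My plan is to proceed by induction on $n$, combined with the dichotomy of whether $L(A_n)$ is core-free in $\Aut(\M)$.

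For the small base cases $n \le 4$, the argument is direct: for $n \in \{2,3\}$ every non-identity element of $A_n$ is a generator of $A_n$, and for $n = 4$ the only problematic possibility $X \cap A_4 = \{a^2\}$ will be eliminated by the non-core-free argument below (which reduces $n=4$ to the Klein-four case $n=2$). For the inductive step $n \ge 5$, I split into two cases. If $L(A_n)$ is core-free in $\Aut(\M)$, then Theorem~\ref{core-free} identifies $\M$ up to equivalence as $CM(D_n, a\langle a^2\rangle \cup b\langle a^2\rangle, p)$ with $n = 2m$ and $m$ odd; since $a \in X$ the conclusion is immediate. If $L(A_n)$ is not core-free, then its core in $\Aut(\M)$ is $L(N)$ for some non-trivial subgroup $N$ of $A_n$, so $D_n/N$ is a block system for $\Aut(\M)$ and Proposition~\ref{quo} yields a regular Cayley map $\M/N$ on $D_n/N \cong D_{n/|N|}$. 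Since no element of $X$ lies in $N$, the hypothesis $X \cap A_n \ne \emptyset$ passes to $(X/N) \cap (A_n/N) \ne \emptyset$, and the inductive hypothesis applied to $\M/N$ gives $\langle (X/N) \cap (A_n/N)\rangle = A_n/N$. Lifting back to $A_n$ this reads $NH = A_n$.

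The main obstacle — and where the substantive work lies — is to upgrade the identity $NH = A_n$ to the desired $H = A_n$; since $A_n$ is cyclic, this is equivalent to showing $N \le H$. To handle this I would take $N$ to be maximal among subgroups of $A_n$ for which $L(N)$ is normal in $\Aut(\M)$, so that $\M/N$ becomes itself core-free and Theorem~\ref{core-free} can be applied to the quotient, pinning down the precise distribution of the image of $X \cap A_n$ among the $N$-cosets of $A_n$. Combined with the structural properties $N \subseteq \ker(\pi)$ and $\varphi(N) = N$ of the core (both read off from the identity $\varphi L_g \varphi^{-1} = L_{\varphi(g)}\varphi^{\pi(g)-1}$), one can apply Proposition~\ref{KN} to the skew-morphism induced by $\varphi$ on the cyclic subgroup $A_n/N$, with $(X/N)\cap(A_n/N)$ serving as a generating $\langle\varphi^{G/N}\rangle$-orbit, to produce a generator of $A_n/N$ inside that image. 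The delicate final step — expected to be the principal technical difficulty — is to verify that a preimage of such a generator can be chosen inside $H$ itself (rather than merely inside $NH$); I expect this to follow from combining Propositions~\ref{half-kernel} and~\ref{kernel-at-least-4} with the maximality of $N$ to rule out every proper divisor of $[A_n:H]$.
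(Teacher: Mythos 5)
Your setup matches the paper's: induction on $n$, the core-free case dispatched by Theorem~\ref{core-free}, and in the non-core-free case a quotient by a normal subgroup $L(N)$ together with the inductive hypothesis to obtain $\langle X_a\rangle N = A_n$ where $X_a = X\cap A_n$. But the proof stops exactly where the real work begins. You state yourself that the ``substantive work'' is to upgrade $\langle X_a\rangle N=A_n$ to $\langle X_a\rangle=A_n$, and what you offer for that step is a plan, not an argument: taking $N$ maximal, invoking Theorem~\ref{core-free} on the quotient, and appealing to Proposition~\ref{KN} all produce information \emph{modulo} $N$, i.e.\ they can only ever reprove $\langle X_a\rangle N=A_n$ in various forms; none of them forces $N\le\langle X_a\rangle$. (Proposition~\ref{KN} is in fact the tool for the \emph{next} lemma, Lemma~\ref{cyclic-generator}, where one needs a single generator of $A_n$ inside $X$.) The closing sentence ``I expect this to follow from combining Propositions~\ref{half-kernel} and~\ref{kernel-at-least-4} with the maximality of $N$'' is precisely the missing idea, left unexecuted.

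The paper closes this gap by a different mechanism. Arguing by contradiction with $A^*=\langle X_a\rangle\ne A_n$, it quotients by a normal subgroup $P$ of \emph{prime} order $p$ (not the maximal core), so that induction gives $A_n=A^*P$ and hence, since $A^*\ne A_n$, the direct decomposition $A_n=P\times A^*$ with $p\nmid|A^*|$. Because $P\not\le A^*$, each $P$-coset meets $X$ in exactly one element, which forces $|\psi|=|\psi^{D_n/P}|$ and therefore $|\ker(\pi)|=p\cdot|\ker(\pi^{D_n/P})|\ge 4p$ by Theorem~\ref{kernel-at-least-4} (here evenness of $n>6$ is used to exclude the octahedron exception $n/p=3$; note also that the paper treats odd $n$ separately via the classification in \cite{KMM}, a case your induction does not address). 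Proposition~\ref{half-kernel} then yields a \emph{second} normal prime subgroup $Q$ with $|Q|=q\ne p$, which necessarily lies in $A^*$, and a second application of the inductive hypothesis, now modulo $Q$, gives $A^*/Q=\langle X_a/Q\rangle=A_n/Q$ and hence $A^*=A_n$, the desired contradiction. This two-prime, double-induction step is the heart of the proof and is absent from your proposal.
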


\begin{proof}
We proceed by induction on $n$. The lemma can be checked readily if $n \le 6,$ hence
from now on it is assumed that $n > 6$.
We may also assume that $n$ is even, because if $n$ is odd, then the lemma follows
from the classification of non-balanced regular Cayley maps for $D_n$ with $n$ odd
given in \cite[Theorem~3.2]{KMM}.

If $L(A_n)$ has trivial core in $\Aut(\M),$ then
the lemma follows from Theorem~\ref{core-free} and Remark~\ref{small-maps}.
Thus we assume that there exists a subgroup $P <  A_n$
such that $L(P) \triangleleft \Aut(\M)$ and $|P|=p$ is a prime number.

Let $X_a = X \cap A_n$ and $A^*=\left< X_a \right>$. Assume, towards
a contradiction, that $A^* \ne A_n$. Since $L(P) \triangleleft
\Aut(\M),$ $D_n/P$ is a block system for $\Aut(\M)$. Let us consider
the quotient map $\M/P=CM(D_n/P, X/P, p^{D_n/P}),$ see
Proposition~\ref{quo} and the preceding paragraph. Since $X/P \cap
A_n/P \ne \emptyset,$ the induction hypothesis gives $\left< X_a/P
\right> = A_n/P$. Thus $A_n=\left< X_a P \right> = \left< X_a
\right> P = A^* P$. Since $A^* \ne A_n,$ $P \not\le A^*,$ and so
$A_n = P \times A^*$ (also, $p \nmid |A^*|$ as $A_n$ is a cyclic
group). A right $P$-coset intersecting $X$ will have the same number
of common elements with $X$. Since $P \not\le A^*,$ this number is
$1$. This implies that $|\psi|=|\psi^{D_n/P}|,$ where $\psi^{D_n/P}$
is the skew-morphisms of $D_n/P$ induced by $\psi$ (see
Proposition~\ref{quo}). Thus $\pi$ and $\pi^{D_n/P}$ take the same
number of values, and therefore,  $|D_n :\ker(\pi)| = |D_n/P :
\ker(\pi^{D_n/P})|,$ which gives $|\ker(\pi)| = p \cdot
|\ker(\pi^{D_n/P})|$. By Theorem~\ref{kernel-at-least-4},
$|\ker(\pi^{D_n/P})| \ge 4$ or $n/p=3$. Since $n$ is even and larger
than $6,$ we find that $\ker(\pi) \ge 4p$. By
Proposition~\ref{half-kernel},  and using also that $p^2 \nmid |A_n|$,
there exists a subgroup $Q < A_n$
such that $L(Q) \triangleleft \Aut(\M)$ and $|Q|=q$ is a prime
number and $p \ne q$. Then $Q \le A^* = \langle X_a \rangle,$
 and hence $A^*=A^* Q = \langle X_aQ \rangle$.
 This implies that $\langle X_a/Q \rangle = A^*/Q$. On
the other hand, by induction, $X_a/Q$ must generate $A_n/Q,$
implying that $A^* = A_n,$ a contradiction. The lemma is proved.
\end{proof}

\begin{lemma}\label{cyclic-generator}
Let $\M=CM(D_n,X,p)$ be a regular Cayley map for the dihedral group $D_n$.
If $X \cap A_n \ne \emptyset$ then there exists $x \in X$ such that
$A_n=\left< x \right>$.
\end{lemma}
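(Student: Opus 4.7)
The plan is to prove Lemma~\ref{cyclic-generator} by induction on $n$, paralleling the proof of Lemma~\ref{generating-set}. Setting $Y = X \cap A_n$, Lemma~\ref{generating-set} already gives $\langle Y \rangle = A_n$, so the task is to exhibit a single $x \in Y$ with $\langle x \rangle = A_n$. Base cases for small $n$ (say $n \le 6$) are checked directly, e.g.\ against Remark~\ref{small-maps} and the known classifications of regular Cayley maps on $D_2, D_3, D_4$.

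In the inductive step, if $L(A_n)$ is core-free in $\Aut(\M)$, Theorem~\ref{core-free} (for $n>4$) forces $n=2m$ with $m$ odd and $Y = \{a, a^3, \ldots, a^{n-1}\}$, so $a \in Y$ generates $A_n$; for $n \le 4$ the conclusion follows from Remark~\ref{small-maps}. Otherwise, fix $P \le A_n$ of prime order $p$ with $L(P) \triangleleft \Aut(\M)$; by Proposition~\ref{quo}, $\M/P$ is a regular Cayley map on $D_{n/p}$. Applying the inductive hypothesis to $\M/P$ yields $x = a^k \in Y$ with $xP$ generating $A_n/P$, equivalently $\gcd(k, n/p) = 1$. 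If $p^2 \mid n$, this already forces $p \nmid k$, so $\gcd(k, n) = 1$ and $x$ generates $A_n$. If $p \| n$ and $X$ is a union of $P$-cosets (Proposition~\ref{quo}(4)), then $xP \subseteq Y$, and a direct check shows that $xa^{n/p} = a^{k+n/p} \in Y$ satisfies $\gcd(k+n/p, n) = 1$ (using $p \mid k$, $p \nmid n/p$, and $\gcd(k, n/p) = 1$), which is the required generator.

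The remaining subcase is $p \| n$ with $X$ not a union of $P$-cosets; this is where I expect the principal difficulty. Here I plan to imitate the closing argument of Lemma~\ref{generating-set}: Theorem~\ref{kernel-at-least-4} forces $|\ker(\pi)| \ge 4$, and combined with Proposition~\ref{half-kernel} and the direct-product decomposition $A_n = \langle x \rangle \times P$ (valid since $p \| n$), one produces a second prime $q \ne p$ and a subgroup $Q \le A_n$ of order $q$ with $L(Q) \triangleleft \Aut(\M)$ and $Q \le \langle x \rangle$. Applying the inductive hypothesis to $\M/Q$ and to $\M/(PQ)$ (both regular Cayley maps by Proposition~\ref{quo}, using that $L(PQ) = L(P)L(Q) \triangleleft \Aut(\M)$), and iterating this refinement over the prime divisors of $n$ that yield normal prime subgroups, should eventually produce an element of $Y$ whose exponent is coprime to every prime divisor of $n$. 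The delicate part will be to close this iteration cleanly, ensuring that the various generators produced modulo different prime quotients can be realized by a \emph{single} element of $Y$ rather than merely by a product of several elements; this will require careful bookkeeping and repeated verification of the hypotheses of Proposition~\ref{half-kernel} and Theorem~\ref{kernel-at-least-4} at each stage.
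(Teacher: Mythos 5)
There is a genuine gap, and it sits exactly where you flag it yourself: the subcase $p \,\|\, n$ with $X$ not a union of $P$-cosets. The inductive hypothesis applied to $\M/P$, $\M/Q$, $\M/(PQ)$, \dots{} hands you, for each prime quotient, \emph{some} element of $X\cap A_n$ whose image generates that quotient, but these may be different elements for different primes (e.g.\ one element $a^k$ with $k$ even but coprime to $m$, another $a^{k'}$ with $k'$ odd but divisible by an odd prime $q$), and there is no mechanism in your argument for merging them into a single generator of $A_n$. The closing argument of Lemma~\ref{generating-set} that you propose to imitate does not face this problem, because there one only needs the subgroup generated by \emph{all} of $X\cap A_n$ to be large, and subgroups generated modulo different primes combine for free; the single-element version is strictly harder and your iteration does not close. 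Your other subcases are fine ($p^2\mid n$ forces $p\nmid k$; if $X$ is a union of $P$-cosets then $a^{k+n/p}$ works), but they are the easy part.

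The paper's proof avoids induction on the primes altogether. It takes $N\le A_n$ \emph{largest} with $D_n/N$ a block system, so that $\M/N$ is core-free and hence explicitly known from Theorem~\ref{core-free} and Remark~\ref{small-maps}; it then uses the explicit power function of the quotient together with Proposition~\ref{quo}(5) to show that a suitable power $\varphi=\psi^k$ of the skew-morphism is itself a skew-morphism of $D_n$ preserving $A_n$, with $X\cap A_n$ an orbit (or a union of two mutually inverse orbits) of $\langle\varphi\rangle$ on $A_n$. The conclusion then follows from Proposition~\ref{KN}: a generating orbit of a skew-morphism of a cyclic group contains a generator. That proposition is the essential ingredient your proposal never invokes, and without it (or a substitute of comparable strength) I do not see how to complete the missing subcase.
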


\begin{proof}
We set $X_a=X \cap A_n$. Let $N$ be the largest subgroup of $A_n$
 such that the $L(N)$-orbits form a system of blocks
for $\Aut(\M)$. Then $\M/N=CM(D_n/N,X/N,p^{D_n/N})$ is also a
regular Cayley map with associated skew-morphisms $\psi^{D_n/P}$
 by Proposition~\ref{quo}. Clearly, $|A_n/N| \ge 2$.
Observe that $\M/N$ is a core-free map, i.\ e., the cyclic subgroup
$L(A_n/N)$ is core-free in $\Aut(\M/N)$. By  Theorem~\ref{core-free}
and Remark~\ref{small-maps}, we may assume, up to equivalence, that
one of the following holds:
\begin{enumerate}[(i)]
\item $n/|N|=2,$ and $p^{D_n/N}= (N b, N ab, N a);$
\item $n/|N|=3,$ and $p^{D_n/N}= (N a^{-1}, N a, N b, N a^2b);$
\item $n/|N|=4,$ and $p^{D_n/N}= (N b,  N a, N a^{-1});$
\item $n/|N| \equiv 2(\mod 4),$ and
$p^{D_n/N}= (Nb, Na, Na^2b, \ldots, Na^{-1})$.
\end{enumerate}

We give a detailed proof for cases (i) and (ii), and only a sketch
for cases (iii) and (iv) because they are obtained by repeating very
similar  processes.
\medskip

(i): Let $\varphi=\psi^3$.  It is easily seen that $X_a$ is an orbit under
$\langle \varphi \rangle$.  The power function $\pi^{D_n/N}$ takes on $1$ for every element in $D_n/N$. Thus $\pi(x) \equiv 1 (\mod 3)$ for every $x \in D_n,$ see Proposition~\ref{quo}(5), and thus we can write for any $x,y \in D_n,$
$$
\varphi(x y)=\psi^3(xy)=\psi^3(x)\psi^{\pi(x)+\pi(\psi(x))+\pi(\psi^2(x))}(y)=
\psi^3(x)\psi^{3k}(y) = \varphi(x)\varphi^k(y)
$$
for some integer $k$. This gives that $\varphi$ is a skew-morphism of $D_n$.
As $\left< X_a \right>=A_n,$ see Lemma~\ref{generating-set}, $\varphi$ maps
$A_n$ to itself. Let $\bar{\varphi}$ be the restriction of $\varphi$ to $A_n$. Then $\bar{\varphi}$ is a skew-morphism of the cyclic group $A_n$ such that $X_a$ is an
$\left< \bar{\varphi} \right>$-orbit which generates $A_n$.
By Proposition~\ref{KN}, $X_a$ contains a generator of $A_n$.
\medskip

(ii): Let $\varphi=\psi^4$.  In this case $X_a$ splits into two
orbits under $\left< \varphi \right>,$ say $X_a^{(i)},$ $i=1,2$.
Furthermore, they satisfy $X_a^{(1)}=\big( X_a^{(2)} \big)^{-1}$.
Notice that both $X_a^{(1)}$ and $X_a^{(2)}$ generate $A_n$. The
power function $\pi^{D_n/N}$ is given as follows:
$\pi^{D_n/N}(N)=\pi^{D_n/N}(N a^2 b)=1,$ $\pi^{D_n/N}(N
a)=\pi^{D_n/N}(N ab)=2,$ and $\pi^{D_n/N}(N a^2)=\pi^{D_n/N}(N
b)=3$. These imply with Proposition~\ref{quo}(5) that, for every $x
\in D_n$
$$
\pi(x)+\pi(\psi(x))+\pi(\psi^2(x)+\pi(\psi^3(x)) \equiv 0 (\mod 4).
 $$
As in case (i), we deduce from this that $\varphi$ is a skew-morphism of $D_n$
which maps $A_n$ to itself. Now, the restriction of $\varphi$ to $A_n$
is a skew-morphism of $A_n$ with $(X_a)^{(1)}$ is a generating orbit.
The proof can be finished again by Proposition~\ref{KN}.
\medskip

(iii) -- (iv): We let $\varphi=\psi^3$ in case (iii), and $\varphi=\psi^2$ in
case (iv). Then we show that $\varphi$ is a skew-morphism of
$D_n$ which maps $A_n$ to itself. In case (iii) the set $X_a$ splits into two orbits
under $\left< \varphi \right>$ which are inverses to one another, while $X_a$ is an
$\left< \varphi \right>$-orbit in case (iv).
Finally, we finish the proof by applying Proposition~\ref{KN}
to the restriction of $\varphi$ to $A_n$.
\end{proof}

\begin{lemma} \label{refl-groupauto}
Let ${\cal M} = CM(D_n,X,p)$ be a regular Cayley map for the
dihedral group $D_n$. Now ${\cal M}$ is reflexible if and only if there exists an automorphism $\alpha \in \Aut(D_n)$ such that $\alpha(X)=X$ and for any $x \in X$, $\alpha(p(x)) = p^{-1}(\alpha(x))$.
\end{lemma}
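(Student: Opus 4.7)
My plan is to prove the two directions separately: the forward direction is a direct verification of the antirotary conditions, while the backward direction invokes Proposition~\ref{iso}.

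For the $(\Leftarrow)$ direction, given $\alpha \in \Aut(D_n)$ with $\alpha(X) = X$ and $\alpha(p(x)) = p^{-1}(\alpha(x))$ for every $x \in X$, I will verify that $\alpha$ itself satisfies the three conditions defining an antirotary mapping of $\M$ listed before Proposition~\ref{antirotary}. Condition (i) is immediate because $\alpha$ fixes $1_{D_n}$. For (ii), $\alpha(g)^{-1}\alpha(gx) = \alpha(x) \in X$ since $\alpha$ is a homomorphism with $\alpha(X) = X$. For (iii), $\alpha(g)^{-1}\alpha(gp(x)) = \alpha(p(x)) = p^{-1}(\alpha(x)) = p^{-1}(\alpha(g)^{-1}\alpha(gx))$ by the hypothesis on $\alpha$. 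Hence Proposition~\ref{antirotary} gives that $\M$ is reflexible.

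For the $(\Rightarrow)$ direction, suppose $\M$ is reflexible and set $\bar{\M} := CM(D_n, X, p^{-1})$. The plan is to apply Proposition~\ref{iso} to the pair $(\M, \bar{\M})$. I need to verify three things: (a) $\bar{\M}$ is regular, which follows since $\psi^{-1}$ is again a skew-morphism of $D_n$ restricting to $p^{-1}$ on $X$; (b) $\M$ and $\bar{\M}$ are isomorphic as maps, which follows from Proposition~\ref{antirotary} by taking an antirotary $\tau$ of $\M$ as the isomorphism $\M \to \bar{\M}$; (c) $\M$ and $\bar{\M}$ have the same rotation type. Once these are in place, Proposition~\ref{iso} delivers a group automorphism $\alpha \in \Aut(D_n)$ with $\alpha(X) = X$ and $\alpha p = p^{-1}\alpha$, completing the proof.

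The main obstacle is step (c). I would label $X$ inside $\bar{\M}$ by $y_i := \tau(x_i)$; the antirotary identity yields $p^{-1}(y_i) = y_{i+1}$, making this a valid cyclic presentation of $p^{-1}$. The rotation-type equivalence $x_i^{-1} = p^j(x_i) \Leftrightarrow y_i^{-1} = p^{-j}(y_i)$ then reduces to the condition $\tau(x_i^{-1}) = \tau(x_i)^{-1}$ for all $i$. Writing $\tau(x_i) = x_{j_0 - i}$ (a consequence of the antirotary identity and $\tau(1_{D_n}) = 1_{D_n}$), this becomes the identity $c(j_0 - i) + c(i) \equiv j_0 \pmod d$ for all $i \in \ZZ_d$, where $c$ is defined by $x_k^{-1} = x_{c(k)}$. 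The antirotaries of $\M$ form the coset $\tau\langle\psi\rangle$, and replacing $\tau$ by $\tau\psi^{\ell}$ shifts $j_0$ to $j_0 - \ell$; thus there are $d$ candidate values of $j_0$, and the task is to show at least one satisfies the identity. To establish this, I would apply Proposition~\ref{refle-images} to short closed walks in $C(D_n, X)$ using the dihedral relations of $D_n$; when $X \cap A_n \neq \emptyset$ this leverages a generator of $A_n$ inside $X$ provided by Lemma~\ref{cyclic-generator}, while the case $X \subseteq B_n$ is immediate since every element of $B_n$ is self-inverse, forcing $c = \mathrm{id}$ and the identity to hold trivially for any $j_0$.
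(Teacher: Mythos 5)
Your $(\Leftarrow)$ direction coincides with the paper's and is correct. Your $(\Rightarrow)$ direction is organized genuinely differently: the paper directly proves that a suitably chosen orientation-reversing automorphism fixing $1_{D_n}$ is a group automorphism (by induction on word length via Proposition~\ref{refle-images}), whereas you route through Proposition~\ref{iso} applied to the pair $\M$ and $\bar{\M}=CM(D_n,X,p^{-1})$. Steps (a) and (b) are fine, your reduction of (c) to the existence of a residue $j_0$ with $c(j_0-i)+c(i)\equiv j_0 \pmod d$ for all $i$ is correct, and the case $X\subseteq B_n$ is indeed immediate (and arguably cleaner here than in the paper).

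The gap is in the case $X\cap A_n\neq\emptyset$, which is the only substantive case, and your plan for it (``apply Proposition~\ref{refle-images} to short closed walks \dots using the dihedral relations'') is not yet an argument. You do not say which of the $d$ candidate values of $j_0$ to take, and the identity you need is equivalent to the assertion that the corresponding reflection $\tau$ maps $X\cap A_n$ into $A_n$ and intertwines inversion there --- which is precisely the nontrivial content of the lemma. Short closed walks arising from $b^2=(ab)^2=1$ produce, via Proposition~\ref{refle-images}, equations in which the unknown defect $f(i)=c(j_0-i)+c(i)-j_0$ sits inside the argument of $c$ and inside subscripts of elements whose membership in $A_n$ versus $B_n$ is itself undetermined; these do not visibly force $f\equiv 0$. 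What the paper actually does is take the $x_k\in X$ with $\langle x_k\rangle=A_n$ supplied by Lemma~\ref{cyclic-generator}, choose the reflection $\tau$ with $\tau(x_k)=x_k^{-1}$ (so $j_0=k+c(k)$), and prove by induction along \emph{all} powers $x_k, x_k^2,\ldots$ --- a closed walk of length $n$, not a short one --- that $\tau(x_k^j)=x_k^{-j}$; this is what shows $\tau$ inverts $A_n$ and hence that $\tau(x^{-1})=\tau(x)^{-1}$ on $X$. An argument of this strength is still needed to complete your step (c), so as written the proof is incomplete.
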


\begin{proof} Let $p=(x_0,x_1,\ldots,x_{d-1})$.

Suppose there exists an automorphism $\alpha \in \Aut(D_n)$ such that $\alpha(X)=X$ and for any $x \in X$, $\alpha(p(x)) = p^{-1}(\alpha(x))$.  Then $\alpha(1_{D_n})=1_{D_n}$. For any $g \in D_n$ and $x \in X$, $\alpha(g)^{-1}\alpha(gx) = \alpha(g)^{-1}\alpha(g)\alpha(x)=\alpha(x) \in X$ and
$$
\alpha(g)^{-1}\alpha(gp(x)) =\alpha(g)^{-1}\alpha(g)\alpha(p(x))=
\alpha(p(x))=p^{-1}(\alpha(x)) = p^{-1}( \alpha(g)^{-1}\alpha(gx)).$$
Hence $\alpha$ is an antirotary mapping of ${\cal M}$, and consequently ${\cal M}$ is reflexible by Proposition~\ref{antirotary}.

Assume that ${\cal M}$ is reflexible and balanced.  Note that for
any $i=0,\ldots,d-1$, $x_i \in B_n$. Since ${\cal M}$ is regular, we
can choose  an orientation-reversing map automorphism $\psi$ of
${\cal M}$ satisfying that $\psi(1_{D_n})=1_{D_n}$ and
$\psi(x_i)=x_{-i}$ for any $i=0,\ldots, d-1$. This means that
$\psi(X)=X$ and $\psi(p(x_i))=\psi(x_{i+1}) =
x_{-i-1}=p^{-1}(x_{-i})=p^{-1}(\psi(x_i))$. Suppose that there
exists a positive integer $j$ with $j \ge 1$ such that for any
$x_{i_1}, x_{i_2}, \ldots, x_{i_j}\in X$  (not
necessarily distinct), $\psi(x_{i_1}x_{i_2} \cdots
x_{i_j})=\psi(x_{i_1})\psi(x_{i_2}) \cdots \psi(x_{i_j}) =
x_{-i_1}x_{-i_2} \cdots x_{-i_j}$. Then for any $k=0,\ldots, d-1$,
\begin{eqnarray*} \psi(x_{i_1}x_{i_2} \cdots x_{i_j}x_k) &=& \psi(x_{i_1}x_{i_2} \cdots x_{i_j})x_{c(-i_j)+c(i_j)-k}=x_{-i_1}x_{-i_2} \cdots x_{-i_j}x_{-k} \\ &=& \psi(x_{i_1})\psi(x_{i_2}) \cdots \psi(x_{i_j})\psi(x_k) \end{eqnarray*}
by Proposition~\ref{refle-images}. Therefore $\psi$ is a group automorphism of $D_n$.

For the remaining case, let ${\cal M}$ be reflexible and $X \cap A_n \neq \emptyset$. By Lemma~\ref{cyclic-generator}, there exists $x_k \in X$ such that $A_n = \langle x_k \rangle$. Let $c(k)=r$, namely $x_r = x_k^{-1}$. Since ${\cal M}$ is regular, we can choose  an orientation-reversing map automorphism $\psi_1$ of ${\cal M}$ satisfying that $\psi_1(1_{D_n})=1_{D_n}$ and $\psi_1(x_k)=x_r = x_k^{-1}$. Note that for any $i=0,\ldots, d-1$, $\psi_1(x_{k+i})=x_{r-i}$ and hence $\psi_1(X)=X$ and
$\psi_1(p(x_{k+i}))=\psi_1(x_{k+i+1})= x_{r-i-1} = p^{-1}(x_{r-i}) = p^{-1}(\psi_1(x_{k+i}))$.
 Suppose that for some positive integer $j$ with $j \ge 1$, $\psi_1(x_k^{j}) = \psi_1(x_k)^j
 = x_k^{-j}$. Then $$\psi_1(x_k^{j+1}) = \psi_1(x_k^{j}x_k)=\psi_1(x_k^{j})x_{c(r)+c(k)-k} = x_k^{-j}x_r = x_k^{-j-1} = \psi_1(x_k)^{j+1}$$ by Proposition~\ref{refle-images}.
 Hence for any $a^i \in A_n$, $\psi_1(a^i)=a^{-i}$ and for any $a^ib \in B_n$, $\psi_1(a^ib) \in B_n$. Let $b=x_k^s x_t$ for some integers $s$ and $t=0,\ldots,d-1$. Now $x_t \in B_n$ and
$\psi_1(b)=\psi_1(x_k^s x_t)=\psi_1(x_k^s)x_{c(r)+c(k)-t}=x_k^{-s}x_{c(r)+c(k)-t}$. Therefore for any $i=0,\ldots,n-1$,
$$\psi_1(x_k^ib) =\psi_1(x_k^{i+s}x_t) = x_k^{-i-s}x_{c(r)+c(k)-t}= x_k^{-i}x_k^{-s}x_{c(r)+c(k)-t}=\psi_1(x_k^i)\psi_1(b).$$
Therefore $\psi_1$ is a group automorphism of $D_n$.
\end{proof}

The proof of Lemma~\ref{refl-groupauto} implies that if ${\cal M} = CM(D_n,X,p)$ is a reflexible regular balanced Cayley map then any orientation-reversing automorphism of ${\cal M}$ fixing $1_{D_n}$ is also a group automorphism. For  a reflexible regular non-balanced Cayley map ${\cal M} = CM(D_n,X,p)$, the proof of Lemma~\ref{refl-groupauto} implies that there exists a $\alpha_{-1,j} \in \Aut(D_n)$ which is also an orientation-reversing map automorphism  of ${\cal M}$. So we have the following corollay.

\begin{cor} \label{non-bal-ref-auto}
Let ${\cal M} = CM(D_n,X,p)$ be a regular non-balanced Cayley map for the
dihedral group $D_n$. Now ${\cal M}$ is reflexible if and only if there exists an automorphism $\alpha \in \Aut(D_n)$ such that $\alpha(a)=a^{-1}, \ \alpha(X)=X$ and for any $x \in X$, $\alpha(p(x)) = p^{-1}(\alpha(x))$.
\end{cor}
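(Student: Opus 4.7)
The plan is to view Corollary~\ref{non-bal-ref-auto} as a refinement of Lemma~\ref{refl-groupauto}: in the non-balanced case, the automorphism $\alpha$ whose existence is asserted by the lemma can in fact be chosen to satisfy $\alpha(a) = a^{-1}$. The \emph{if} direction is immediate, since the hypothesis of the corollary is strictly stronger than that of Lemma~\ref{refl-groupauto}, so any $\alpha$ satisfying it already yields reflexibility through that lemma.

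For the \emph{only if} direction, I would first observe that a non-balanced regular Cayley map on $D_n$ must have $X \cap A_n \neq \emptyset$. Indeed, by the theorem of Wang and Feng recalled in Section~2, every regular balanced Cayley map on $D_n$ is of the form $\M(n,\ell)$, whose generating set is contained in $B_n$; so the assumption $X \subseteq B_n$ would force $\M$ to be balanced, contradicting our hypothesis. This places us in the second sub-case of the proof of Lemma~\ref{refl-groupauto}.

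In that sub-case, the proof of Lemma~\ref{refl-groupauto} exhibits an explicit orientation-reversing automorphism $\psi_1$ of $\M$: one picks, via Lemma~\ref{cyclic-generator}, an element $x_k \in X$ with $A_n = \langle x_k \rangle$, and defines $\psi_1$ by $\psi_1(1_{D_n}) = 1_{D_n}$ and $\psi_1(x_k) = x_k^{-1}$. It is established there, by induction on the exponent using Proposition~\ref{refle-images}, that $\psi_1$ is a group automorphism of $D_n$ and that $\psi_1(a^i) = a^{-i}$ for every $i$. In particular $\psi_1(a) = a^{-1}$, so $\psi_1 = \alpha_{-1,j}$ for some $j \in \{1,\ldots,n\}$ in the notation of Section~2, and this is the automorphism required by the corollary.

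I do not expect any substantive obstacle here, since the hard work has already been done inside the proof of Lemma~\ref{refl-groupauto}; the only things to record are the extra piece of information that the constructed automorphism inverts $a$, and the observation (via Wang--Feng) that the non-balanced hypothesis rules out the first sub-case. The role of the corollary is therefore not to introduce a new technique but to package this information in the sharper form that will be convenient when analyzing reflexible non-balanced maps in the remainder of the paper.
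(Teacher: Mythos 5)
Your proposal follows essentially the same route as the paper: the corollary is not given an independent proof there either, but is read off from the second sub-case of the proof of Lemma~\ref{refl-groupauto}, where the constructed orientation-reversing automorphism $\psi_1$ is shown to be a group automorphism with $\psi_1(a^i)=a^{-i}$ for all $i$, hence one of the $\sigma_{-1,j}$ in the notation of Section~2; the \emph{if} direction is indeed immediate from the lemma. The one step you should repair is your justification that non-balancedness forces $X \cap A_n \neq \emptyset$. You deduce it from the Wang--Feng classification, but that theorem gives the implication ``balanced $\Rightarrow$ $X$ has the stated form contained in $B_n$,'' and you are invoking its converse, which is affirming the consequent. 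The fact you need is nevertheless true for an elementary reason that needs no classification: if $X \subseteq B_n$, then every $x\in X$ is an involution, so $p(x^{-1})=p(x)=p(x)^{-1}$ and the map is balanced by definition; contrapositively, a non-balanced map satisfies $X \cap A_n \neq \emptyset$ and you land in the second sub-case of the lemma's proof as claimed. With that substitution your argument is complete and coincides with the paper's.
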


 We call  such $\alpha$  in Corollay~\ref{non-bal-ref-auto} a \emph{partially inverting reflection} of ${\cal M}$. In fact, there is a unique partially inverting reflection of ${\cal M}$ and for any $x_i, x_k \in X \cap A_n$, $i +c(i) \equiv k + c(k) (\mod d)$.
 Let $r = \min\{ k \in \mathbb{Z} \ \mid \ p^k(x_i) = x_i^{-1}, \ x_i \in X \cap A_n, \ k \ge 1 \}$ and call it the \emph{reflection index} of ${\cal M}$ and denote it by $r({\cal M})$.

\section{Proof of Theorem~\ref{main-result}}

In this Section we prove Theorem~\ref{main-result}, namely we classify reflexible regular Cayley maps for the dihedral groups $D_n$. All lemmas in this section give a proof
of Theorem~\ref{main-result}.

If ${\cal M} = CM(D_n,X,p)$ is a reflexible regular Cayley map for the dihedral group $D_n$ and its valency $d$ is 2, then one can easily show that ${\cal M}$ is isomorphic to $CM(D_n, \{b,ab\},$ $ p=(b, ab))$, which is reflexible regular.  For $d=3$, we have the following lemma.

\begin{lemma} \label{d=3}
Let  ${\cal M} = CM(D_n, X, p)$ be a  Cayley map for the dihedral
group $D_n$ of valency $d=3$. Now ${\cal M}$ is reflexible regular
if and only if ${\cal M}$ is isomorphic to
$CM(D_2,\{b,ab,a\},(b,ab,a))$, $CM(D_3,\{b,ab,a^2b\},(b,ab,a^2b))$
or $CM(D_4,\{b, a,a^{-1} \},(b,$ $a,a^{-1}))$.
\end{lemma}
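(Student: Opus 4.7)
The plan is to split on whether $X$ meets $A_n$ and, in each branch, to pin down $(n,X,p)$ up to equivalence using Lemma~\ref{refl-groupauto} and the structural material of Section~3.

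If $X \subseteq B_n$, every $x \in X$ is an involution, so $\M$ is balanced. By the Wang--Feng classification quoted after Proposition~\ref{quo}, $\M \cong \M(n,\ell)$ with $\gcd(\ell,n) = 1$ and $\ell^{2}+\ell+1 \equiv 0 \pmod n$ (the latter because $d = 3$ is the minimal exponent). Lemma~\ref{refl-groupauto} demands $\sigma_{i,j} \in \Aut(D_n)$ with $\sigma_{i,j}(X) = X$ and $\sigma_{i,j}(p(x)) = p^{-1}(\sigma_{i,j}(x))$, and since $\sigma_{i,j}(b) \in X = \{b,\, ab,\, a^{\ell+1}b\}$ there are three subcases, producing the identities $(\ell+1)^{2} \equiv 1$, $2\ell \equiv -1$, or $\ell^{2} \equiv 1 \pmod n$ respectively. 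Each of these, combined with $\ell^{2}+\ell+1 \equiv 0 \pmod n$, forces $n \mid 3$, so $n = 3$ and $\ell = 1$. This yields the map $CM(D_3,\{b, ab, a^{2}b\},(b, ab, a^{2}b))$.

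If $X \cap A_n \ne \emptyset$, Lemma~\ref{cyclic-generator} supplies $x \in X$ with $\langle x \rangle = A_n$. If $|x| = 2$ then $n = 2$ and $X$ must be the full set of involutions, giving the tetrahedron $CM(D_2,\{b, ab, a\},(b, ab, a))$. Otherwise $|x| > 2$, so $x \ne x^{-1}$ lie in $X$; the remaining element is an involution, and it must lie in $B_n$ (else $X \subseteq A_n$ and $\langle X \rangle \ne D_n$), so after a suitable $\sigma_{i,j}$ we may assume $X = \{a, a^{-1}, b\}$. The two cyclic orderings of $X$ are interchanged by $\sigma_{-1,0}$, so up to equivalence $p = (b, a, a^{-1})$. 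Computing $c$ and using Proposition~\ref{properties-skew} yields $\pi(b) = \pi(a) = \pi(a^{-1}) = 2$, then $\varphi(a^{2}) = a^{-1}b$ and $\pi(a^{2}) = 1$, and therefore
\[
\varphi(a^{4}) \;=\; \varphi(a^{2})\,\varphi^{\pi(a^{2})}(a^{2}) \;=\; (a^{-1}b)(a^{-1}b) \;=\; 1_{D_n}.
\]
Since $\varphi$ is a bijection with $\varphi(1_{D_n}) = 1_{D_n}$, we conclude $a^{4} = 1_{D_n}$, hence $n \mid 4$; together with $|x| > 2$ this forces $n = 4$. A short direct check confirms that the partial data really does extend to a skew-morphism of $D_4$, and $\alpha = \sigma_{-1,0}$ satisfies Corollary~\ref{non-bal-ref-auto}, yielding $CM(D_4,\{b, a, a^{-1}\},(b, a, a^{-1}))$.

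For the converse, each of the three listed maps is easily seen to be a reflexible regular Cayley map, via Lemma~\ref{refl-groupauto} for the balanced map on $D_3$ and Corollary~\ref{non-bal-ref-auto} for the other two. The main technical step is the computation $\varphi(a^{4}) = 1_{D_n}$ in the case $X = \{a, a^{-1}, b\}$: it is exactly this identity that pins down $n = 4$ and produces the cube, so applying the skew-morphism identity consistently (in particular tracking $\pi(a^{2}) = 1$) is the place where care is needed; everything else is routine case analysis given the reflexibility criteria and the Wang--Feng classification.
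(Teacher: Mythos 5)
Your proposal is correct and follows essentially the same route as the paper: the same dichotomy ($X\subseteq B_n$, i.e.\ balanced, versus $X\cap A_n\neq\emptyset$), the same appeal to Lemma~\ref{refl-groupauto}, Lemma~\ref{cyclic-generator} and the normalization $p=(b,a,a^{-1})$, and a skew-morphism computation to force $n=3$ or $n=4$. The only (harmless) deviations are that you run three subcases for the reflection in the balanced branch where the paper normalizes the reflection to fix $b$, and that you derive $n\mid 4$ from $\varphi(a^4)=\varphi(1_{D_n})$ rather than from the paper's comparison of $\varphi(ab)$ with $\varphi(ba^{-1})$.
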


\begin{proof}
The underlying graphs of
$CM(D_2,\{b,ab,a\},(b,ab,a))$, $CM(D_3,\{b,ab,a^2b\},(b,$
$ab,a^2b))$ and $CM(D_4,\{b, a,$ $a^{-1} \},(b,a,a^{-1}))$ are
 the complete graph $K_4$, the complete bipartite graph
$K_{3,3}$ and $3$-cube $Q_3$, respectively, and  one can easily show
that these Cayley maps are reflexible regular.

Let ${\cal M} = CM(D_n, X, p)$ be a reflexible regular  Cayley map
for the dihedral group $D_n$ of valency $d=3$.  If
$n=2$, then one can easily show that ${\cal M} =
CM(D_2,\{b,ab,a\},(b,$ $ab,a))$. Assume that $n \ge 3$. Suppose
that ${\cal M}$ is balanced. Then ${\cal M}$ is isomorphic to
$CM(D_n,\{b,ab,a^{\ell+1}b\},$ $(b,ab,a^{\ell+1}b))$ for some $\ell$
satisfying $\ell^2+\ell+1 \equiv 0(\mod n)$. By
Lemma~\ref{refl-groupauto}, one can assume that there exists a group
automorphism $\psi$ of $D_n$ such that $\psi(b)=b, \
\psi(ab)=a^{\ell+1}b$ and $\psi(a^{\ell+1}b)=ab$. Now we have
$\psi(a)=\psi(abb)=a^{\ell+1}$. So $ab = \psi(a^{\ell+1}b) =
a^{(\ell+1)^2}b=a^{\ell^2+2\ell+1}b =a^{\ell}b$. This means that
$\ell=1$ and hence ${\cal M}$ is isomorphic to
$CM(D_3,\{b,ab,a^2b\},(b,ab,a^2b))$.

For the remaining case, let $X \cap A_n \neq \emptyset$ and let $p=(x_0,x_1,x_2)$ such that $x_0 \in B_n$ and $x_1^{-1} = x_2$. By Proposition~\ref{iso} and
Lemma~\ref{cyclic-generator},
one can assume that $p=(b,a,a^{-1})$. Now, $\ker(\pi)=\langle a^2, ab \rangle$ and for any $g \in D_n \setminus \ker(\pi)$, $\pi(g)=2$. This means that
$$\varphi(ab)=\psi(a)\varphi^2(b)=a^{-2} \ \ \mbox{and} \ \ \varphi(ba^{-1}) = \varphi(b)\varphi^2(a^{-1}) = a^2.$$  Since $ab=ba^{-1}$, we have $a^2=a^{-2}$.
 Since we assume $n \ge 3$,  $n=4$ and ${\cal M}$ is
isomorphic to $CM(D_4,\{b, a,a^{-1} \},(b,a,a^{-1}))$.
\end{proof}

In Lemma~\ref{d=3}, we classify reflexible regular Cayley maps for $D_n$ of valency $3$. From now on, we assume that the valency of ${\cal M}$ is greater than or equal to $4$. We  show that the Cayley map ${\cal M}_i$ in Theorem~\ref{main-result} is reflexible regular  and  any reflexible regular Cayley map for $D_n$ with valency $d \ge 4$ is isomorphic to  ${\cal M}_i$ in Theorem~\ref{main-result}.

\begin{lemma} \label{list-reflexible}
For any $i=1,\ldots,6$, the Cayley map ${\cal M}_i$ in Theorem~\ref{main-result} is reflexible regular.
\end{lemma}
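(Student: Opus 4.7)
The plan is, for each of the six families $\mathcal{M}_i = CM(D_n, X_i, p_i)$, to exhibit concrete witnesses for the two standard characterisations recalled in Section~2: a skew-morphism $\varphi_i$ of $D_n$ whose restriction to $X_i$ equals $p_i$ (which gives regularity), together with an automorphism $\alpha_i \in \Aut(D_n)$ satisfying $\alpha_i(X_i) = X_i$ and $\alpha_i \circ p_i = p_i^{-1} \circ \alpha_i$ on $X_i$ (which by Lemma~\ref{refl-groupauto} gives reflexibility).

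For $\mathcal{M}_1$ the map is balanced, so $\varphi_1$ is already a group automorphism: I would take $\varphi_1 = \sigma_{\ell,1}$, which sends $b \mapsto ab$ and, using $\ell^2 \equiv 1 \pmod n$, iterates to $\varphi_1^k(b) = a^{s_k}b$ with $s_k = \ell^{k-1}+\cdots+\ell+1$. For the reflection I would take $\alpha_1 = \sigma_{-\ell,0}$, which is an involution since $\ell^2\equiv 1$; a short computation in $\Aut(D_n)$ using the composition rule $\sigma_{i,j}\sigma_{k,l}=\sigma_{ik,\,il+j}$ gives $\alpha_1 \varphi_1 \alpha_1^{-1} = \varphi_1^{-1}$, from which $\alpha_1(x_k) = x_{-k}$ and the reversal property follow immediately. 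For $\mathcal{M}_2$ and $\mathcal{M}_3$ (the ``core-free type'' maps from Theorem~\ref{core-free}), the generating set is $a\langle a^2\rangle \cup b\langle a^2\rangle$, with a perturbation in two positions for $\mathcal{M}_3$. I would define $\varphi_i$ by prescribing its values on $a$ and $b$ according to the pattern read off from $p_i$, verify the skew-morphism identity $\varphi_i(xy)=\varphi_i(x)\varphi_i^{\pi(x)}(y)$ by a case distinction according to which of the four $\langle a^2\rangle$-cosets in $D_n$ the factors $x,y$ belong to, and take the reflection to be of the form $\sigma_{-1,t}$ with $t$ forced by the requirement $\alpha_i(x_0)=x_{c(0)}$. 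The arithmetic hypothesis $8 \mid n$ in $\mathcal{M}_3$ is exactly what makes the perturbed positions $a^{m+2}b$ and $a^{m-2}b$ consistent with both $\varphi_3$ and $\alpha_3$.

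For the three exceptional low-valency families $\mathcal{M}_4,\mathcal{M}_5,\mathcal{M}_6$ (with $4$ or $6$ generators), the approach is purely computational. I would read off the power function on $X_i$ via the identity $\pi(x_k) = c(k+1) - c(k)$ derived just before Proposition~\ref{properties-skew}, extend $\pi$ to all of $D_n$ using its constancy on $\ker(\pi)$-cosets (Proposition~\ref{properties-skew}(2)), define $\varphi_i$ on a set of coset representatives, and then verify the skew-morphism identity on a finite list of products. In each case the reflection $\alpha_i$ is of the form $\sigma_{-1,s_i}$, where $s_i$ is determined by the prescribed action of $\alpha_i$ on a single generator; the condition $\alpha_i p_i = p_i^{-1}\alpha_i$ then collapses to a handful of exponent identities modulo $n$. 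The main obstacle I expect is precisely this verification for $\mathcal{M}_5$ and $\mathcal{M}_6$: the delicate exponent choices $a^{\pm 1}, a^{m\pm 1}, a^{m\pm 2}b$, together with the parity hypotheses $n \equiv 4\pmod 8$ and $n \equiv 2\pmod 4$ respectively, are genuinely needed for the skew-morphism axiom to close, and it is in tracking these $\pm 1$ shifts modulo $n$ (rather than in any conceptual step) that a computational slip is most likely.
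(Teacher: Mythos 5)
Your proposal follows essentially the same route as the paper: for each $\mathcal{M}_i$ the paper simply writes down an explicit skew-morphism $\varphi_i$ (with power function $\pi_i$) restricting to $p_i$ on $X_i$, together with a group automorphism $\psi_i$ of the very form $\sigma_{-\ell,0}$ or $\sigma_{-1,t}$ you predict, and invokes Lemma~\ref{refl-groupauto}. The only difference is that the paper records the explicit formulas while you describe how to derive and verify them, so your outline is correct and matches the paper's argument.
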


\begin{proof} If $n$ is even then let $n=2m$. For any $i=1,\ldots,6$,  let $\varphi_i, \pi_i$ and $\psi_i$ be defined as follows. Now one can check that $\varphi_i$ is a skew-morphism of
$D_n$ with associated power function $\pi_i$ and $X_i$ is an orbit
of $\varphi_i$ which is a generating set of $D_n$ and closed under
inverse, and the restriction of $\varphi_i$ on $X_i$ is $p_i$. Hence
${\cal M}_i$ is regular. Furthermore $\psi_i$ is a group
automorphism of $D_n$ satisfying $\psi_i(X_i)=X_i$ and
$\psi_i(p(x))=p^{-1}(\psi_i(x))$ for any $x \in X_i$.   So ${\cal
M}_i$ is reflexible  by Lemma~\ref{refl-groupauto}. Let $j$ be an
integer.

\noindent (1) \vspace{-.5cm} \begin{eqnarray*} \varphi_1(a^j)&=& a^{j\ell} \ \ \mbox{and} \ \ \varphi_1(a^jb)=a^{j\ell+1}b  \\
\pi_1(g)&=&1  \ \ \mbox{for any $g \in D_n$} \\
\psi_1(a^j)&=& a^{-j\ell} \ \ \mbox{and} \ \ \psi_1(a^jb)=a^{-j\ell}b.  \end{eqnarray*}
\noindent (2)\vspace{-.5cm} \begin{eqnarray*}
\varphi_2(a^j) &=& \begin{cases} a^{-j} & \mbox{ if $j$ is even} \\  a^{j+1}b & \mbox{ if $j$ is odd} \end{cases} \quad
\varphi_2(a^jb) = \begin{cases} a^{j+1} & \mbox{ if $j$ is even} \\  a^{-j}b & \mbox{ if $j$ is odd} \end{cases}
\\
\pi_2(a^{2j+1}) &=& \pi_2(a^{n-2j-2}b) \equiv 4j+3 (\text{mod } n) \ \ \mbox{and} \\ \pi_2(a^{2j}) &=& \pi_2(a^{n-2j-1}b) \equiv 4j+1 (\text{mod } n) \\
\psi_2(a^j)&=&a^{-j} \ \ \mbox{and} \ \ \psi_2(a^jb)=a^{-j}b. \end{eqnarray*}

\noindent (3) \vspace{-.5cm} \begin{eqnarray*}
\varphi_3(a^j) &=& \begin{cases} a^{\frac{j}{2}m-j} & \mbox{ if $j$ is even} \\  a^{j+1+\frac{j+1}{2}m}b & \mbox{ if $j$ is odd} \end{cases} \quad
\varphi_3(a^jb) = \begin{cases}
a^{j+1+\frac{j}{2}m} & \mbox{ if $j$ is even} \\
a^{\frac{j+1}{2}m-j}b & \mbox{  if $j$ is odd}
\end{cases}
\\
\pi_3(a^{2j+1}) &=& \pi_3(a^{n-2j-2}b) \equiv 4j+3 (\text{mod } n) \ \ \mbox{and} \\ \pi_3(a^{2j}) &=& \pi_3(a^{n-2j-1}b) \equiv 4j+1 (\text{mod } n) \\
\psi_3(a^j)&=&a^{-j} \ \ \mbox{and} \ \ \psi_3(a^jb)=a^{-j}b. \end{eqnarray*}

\noindent (4) \vspace{-.5cm} \begin{eqnarray*}
\varphi_4(a^j) &=& \begin{cases} a^{-j} & \mbox{ if $j \equiv 0 (\mod 3)$} \\  a^{-j+1}b & \mbox{ if $j \equiv 1 (\mod 3)$} \\ a^{-j} & \mbox{ if $j \equiv 2 (\mod 3)$} \end{cases} \quad
\varphi_4(a^jb) = \begin{cases} a^{-j+2}b & \mbox{ if $j \equiv 0 (\mod 3)$} \\  a^{-j+2}b & \mbox{ if $j \equiv 1 (\mod 3)$} \\ a^{-j+1} & \mbox{ if $j \equiv 2 (\mod 3)$} \end{cases}
\\
\pi_4(a^{3j}) &=& \pi_4(a^{3j}b) = 1, \  \ \pi_4(a^{3j+1}) = \pi_4(a^{3j+2}b) = 2 \ \ \mbox{and} \ \ \pi_4(a^{3j+2}) = \pi_4(a^{3j+1}b) = 3  \\
\psi_4(a^j)&=&a^{-j} \ \ \mbox{and} \ \ \psi_4(a^jb)=a^{-j+2}b. \end{eqnarray*}

\noindent (5) \vspace{-.5cm} \begin{eqnarray*}
\varphi_5(a^j) &=& \begin{cases} a^{-j} & \mbox{ if $j \equiv 0 (\mod 4)$} \\  a^{-j+1}b & \mbox{ if $j \equiv 1 (\mod 4)$} \\ a^{-j+1+m}b & \mbox{ if $j \equiv 2 (\mod 4)$} \\ a^{-j} & \mbox{ if $j \equiv 3 (\mod 4)$} \end{cases} \quad
\varphi_5(a^jb) = \begin{cases} a^{-j+1+m}b & \mbox{ if $j \equiv 0 (\mod 4)$} \\  a^{-j+2+m}b & \mbox{ if $j \equiv 1 (\mod 4)$} \\ a^{-j+2+m}b & \mbox{ if $j \equiv 2 (\mod 4)$} \\ a^{-j+1} & \mbox{ if $j \equiv 3 (\mod 4)$} \end{cases}
\\
 \pi_5(a^{4j}) &=& \pi_5(a^{4j+1}b) = 1, \  \ \pi_5(a^{4j+1}) = \pi_5(a^{4j}b) = 2 \\ \pi_5(a^{4j+2}) &=& \pi_5(a^{4j+3}b) = 4 \ \ \mbox{and} \ \ \pi_5(a^{4j+3}) = \pi_5(a^{4j+2}b) = 5 \\
\psi_5(a^j)&=&a^{-j} \ \ \mbox{and} \ \ \psi_5(a^jb)=a^{-j+2+m}b. \end{eqnarray*}

\noindent (6) \vspace{-.5cm} \begin{eqnarray*}
\varphi_6(a^j) &=& \begin{cases} a^{-j} & \mbox{ if $j \equiv 0 (\mod 2)$} \\  a^{-j-1}b & \mbox{ if $j \equiv 1 (\mod 2)$} \end{cases} \quad
\varphi_6(a^jb) = \begin{cases} a^{-j-2+m}b & \mbox{ if $j \equiv 0 (\mod 2)$} \\  a^{-j-1+m} & \mbox{ if $j \equiv 1 (\mod 2)$}  \end{cases}
\\
 \pi_6(a^{4j})&=& \pi_6(a^{2j}b) = 1 \ \ \mbox{and} \ \ \pi_6(a^{2j+1}) = \pi_6(a^{2j+1}b) = 4 \\
\psi_6(a^j)&=&a^{-j} \ \ \mbox{and} \ \ \psi_6(a^jb)=a^{-j-2+m}b. \end{eqnarray*}
\end{proof}

\begin{lemma} \label{balanced}
If  ${\cal M} = CM(D_n, X, p)$ is a reflexible regular balanced  Cayley map
for the dihedral group $D_n$, then ${\cal M}$ is isomorphic to  ${\cal M}_1$ in
Theorem~\ref{main-result}.
\end{lemma}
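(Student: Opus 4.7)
The plan is to combine the Wang--Feng classification of balanced regular Cayley maps on $D_n$ with Lemma~\ref{refl-groupauto}. By \cite{WF}, up to equivalence $\M$ coincides with $\M(n,\ell)$ for some $\ell$ with $\gcd(\ell,n)=1$, whose rotation is $p=(b,\,ab,\,a^{\ell+1}b,\,\ldots,\,a^{s_{d-1}}b)$ with $s_k = 1+\ell+\cdots+\ell^{k-1}$, and where $d$ is the smallest positive integer satisfying $s_d \equiv 0 \pmod n$. Since the map $\M_1$ in Theorem~\ref{main-result} is precisely $\M(n,\ell)$ with the additional hypothesis $\ell^2 \equiv 1 \pmod n$, and since Wang--Feng showed that distinct admissible values of $\ell$ yield non-isomorphic maps, the whole lemma reduces to showing that reflexibility forces $\ell^2 \equiv 1 \pmod n$.

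To extract this congruence, I would invoke Lemma~\ref{refl-groupauto} to obtain an automorphism $\alpha = \sigma_{i,j} \in \Aut(D_n)$ with $\alpha(X) = X$ and $\alpha(p(x)) = p^{-1}(\alpha(x))$ for every $x \in X$. Writing $x_k = a^{s_k}b$ and using $\alpha(a^s b) = a^{is+j}b$, the invariance $\alpha(X) = X$ produces a map $m : \ZZ_d \to \ZZ_d$ with $\alpha(x_k) = x_{m(k)}$, while the intertwining relation forces $m(k+1) = m(k) - 1$ for all $k$. Setting $r := m(0)$, the whole picture collapses to the system of congruences $i\, s_k + j \equiv s_{r-k} \pmod n$, valid for every $k \in \ZZ_d$.

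Evaluating this system at $k = 0$ gives $j \equiv s_r$; at $k = 1$ it yields $i \equiv s_{r-1} - s_r \equiv -\ell^{r-1} \pmod n$; and at $k = 2$, substituting these values into $i(1+\ell) + j \equiv s_{r-2} \pmod n$ and using the identity $s_r - s_{r-2} = \ell^{r-2}(1+\ell)$, the remaining relation simplifies to $\ell^r \equiv \ell^{r-2} \pmod n$. Since $\gcd(\ell,n) = 1$, this is exactly $\ell^2 \equiv 1 \pmod n$. Combined with Wang--Feng's uniqueness, this identifies $\M$ with $\M_1$ and completes the proof.

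I do not foresee a serious obstacle; the only delicate step is the algebraic bookkeeping at $k = 2$, which is routine once the expressions for $i$ and $j$ are in hand. It is worth noting that the conclusion uses only the first three values $k = 0, 1, 2$, so even degenerate low-valency cases are covered automatically, and no further congruences $k \ge 3$ need be checked since they follow from $\ell^2 \equiv 1$ (indeed $\ell^{r-k}$ depends only on the parity of $k$ modulo the period).
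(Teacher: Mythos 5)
Your proposal is correct and follows essentially the same route as the paper: both reduce via the Wang--Feng classification to a map $\M(n,\ell)$ and then use Lemma~\ref{refl-groupauto} to extract the single congruence $\ell^2\equiv 1\pmod n$ from the existence of the reversing automorphism. The only difference is bookkeeping: the paper normalizes the reflection to fix $b$ and computes with $k=s_{d-1}=\ell^{d-2}+\cdots+1$, showing $k^2\equiv 1$ and $k\equiv-\ell\pmod n$, whereas you solve the congruence system $i\,s_k+j\equiv s_{r-k}$ for a general $\sigma_{i,j}$ at $k=0,1,2$ --- both computations are valid and yield the same conclusion.
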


\begin{proof}
Let $p = (b, ab, a^{\ell+1}b, \ldots, a^{\ell^{d-2}+\ell^{d-3}+\cdots+1}b)$ for some positive integer $\ell$ such that $\ell^{d-1}+\ell^{d-2}+\cdots+1 \equiv 0 (\mod n)$.
By Lemma~\ref{refl-groupauto}, one can assume that there exists a group automorphism $\psi$ of $D_n$ such that $\psi(b)=b, \ \psi(ab)=a^{\ell^{d-2}+\ell^{d-3}+\cdots+1}b$ and $\psi(a^{\ell^{d-2}+\ell^{d-3}+\cdots+1}b)=ab$. Let $k=\ell^{d-2}+\ell^{d-3}+\cdots+1$.  Now we have $\psi(a)=\psi(abb)=a^{k}$ and $ab = \psi(a^{k}b) = a^{k^2}b $. So $k^2 \equiv 1 (\mod n)$. Since $\ell^{d-1}+\ell^{d-2}+\cdots+1 = \ell k + 1 \equiv 0 (\mod n)$, we have $k^2 \equiv 1 \equiv -\ell k (\mod n)$.
This implies that $k \equiv -\ell (\mod n)$ and hence $k^2 \equiv \ell^2 \equiv 1 (\mod n)$. Therefore ${\cal M}$ is isomorphic to ${\cal M}_1$ in Theorem~\ref{list-reflexible}.
\end{proof}

From now on, we aim to consider reflexible regular
non-balanced Cayley maps for the dihedral group $D_n$ in terms of
rotation index.

\begin{lemma} \label{r=1}
If  ${\cal M} = CM(D_n, X, p)$ is a reflexible regular Cayley map
for the dihedral group $D_n$ whose rotation index $r({\cal M})$ is $1$, then ${\cal M}$ is isomorphic to  ${\cal M}_4$ or ${\cal M}_5$ in Theorem~\ref{main-result}.
\end{lemma}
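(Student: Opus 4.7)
The plan is to normalize the map using the hypothesis $r(\mathcal{M})=1$, rule out the case $x_2 \in A_n$, and then split on $x_3$ to identify $\mathcal{M}$ as $\mathcal{M}_4$ or $\mathcal{M}_5$. First, by $r(\mathcal{M})=1$ some $x_i \in X \cap A_n$ satisfies $p(x_i) = x_i^{-1}$; after rotating the cycle and using Lemma~\ref{cyclic-generator} and Proposition~\ref{iso}, I apply an equivalence to arrange $x_0 = a^{-1}$ and $x_1 = a$. Corollary~\ref{non-bal-ref-auto} then supplies the partially inverting reflection $\alpha = \sigma_{-1,j} \in \Aut(D_n)$, and the antirotary relation together with $\alpha(a^{-1})=a$ forces $\alpha(x_k) = x_{1-k}$ for every $k \in \ZZ_d$. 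Applying the skew-morphism identity to $a \cdot a^{-1} = 1$ gives $\pi(a) \equiv c(2)$ and $\pi(a^{-1}) \equiv d-1 \pmod{d}$.

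Next I rule out $x_2 \in A_n$. If $x_2 = a^t \in A_n$ then, by the relation $i + c(i) \equiv 1 \pmod{d}$ for $x_i \in X \cap A_n$ (stated just after Corollary~\ref{non-bal-ref-auto}), $c(2) = d-1$, so $\pi(a) = \pi(a^{-1}) = d-1$. Proposition~\ref{properties-skew}(3) applied to $a \cdot a$ combined with the telescoping identity $\sum_{j=0}^{d-1}\pi(x_j) \equiv 0 \pmod{d}$ forces $\pi(a^2) = 1$, hence $a^2 \in \ker(\pi)$; iteration then yields $\varphi(a^{2k}) = a^{k(t-1)}$ for every $k$, using the direct computation $\varphi(a^2) = x_2 \cdot x_0 = a^{t-1}$. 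An injectivity argument comparing $\varphi(a^{-1}) = a$ with $\varphi(a^{2k}) = a$ (forced by solving $k(t-1) \equiv 1 \pmod{n}$) gives a contradiction once the cases based on the parity of $n$ and $\gcd(t-1,n)$ are examined. Thus $x_2 = a^s b \in B_n$, and applying the equivalence $\sigma_{1,-s}$ (which fixes $x_0, x_1$) normalizes $x_2 = b$; consequently $x_{d-1} = \alpha(b) = a^j b$.

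Finally I split on $x_3 = \varphi(b)$. In Case A ($x_3 = a^v b \in B_n$), compute $\varphi(a^2) = b \cdot a^v b = a^{-v}$ and $\pi(a) = 2$, $\pi(b) = 1$, $\pi(a^2) = 3$. Equating the two expansions of $\varphi(a^3)$, namely via $\varphi(a \cdot a^2) = b \cdot \varphi^2(a^2)$ and via $\varphi(a^2 \cdot a) = \varphi(a^2) \cdot \varphi^3(a)$, together with the intertwining $\alpha \circ \varphi = \varphi^{-1} \circ \alpha$ on $X$, forces $d = 4$ and $v = 2$. Extending $\varphi$ over $D_n$ and enforcing consistency yields $3 \mid n$, giving $\mathcal{M} \cong \mathcal{M}_4$. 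In Case B ($x_3 = a^u \in A_n$), compute $\varphi(a^2) = b \cdot a^u = a^{-u} b$; expanding $\varphi(a^u \cdot a^u) = \varphi(a^u)\varphi^{\pi(a^u)}(a^u)$ and using injectivity of $\varphi$ forces $a^{2u} = a^2$, so $n = 2m$ is even and $u = m+1$. Then $x_4 = a^{-u} = a^{m-1}$, and reflection ($x_{d-2} = a^{-u}$) forces $d = 6$. Computing $\varphi(a^{m-1}) = x_5 = a^j b$ via the skew-morphism, and comparing with expansions of $\varphi$ on products involving $a^{m-1}$, pins down $j = m+2$ and the congruence $m \equiv 2 \pmod 4$, i.e., $n = 8k+4$, giving $\mathcal{M} \cong \mathcal{M}_5$.

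The main technical obstacle is Case B: pinning down $j = m+2$ and the congruence $n \equiv 4 \pmod 8$ requires iterating the skew-morphism through several elements of $D_n \setminus X$ (in particular computing $\varphi$ on various powers $a^k$ and $a^k b$) and verifying that all defining relations of $D_n$ are respected while the skew-morphism closes up with $|\varphi| = d$. A secondary subtlety is the injectivity argument ruling out $x_2 \in A_n$: the analysis bifurcates according to the parity of $n$ and $\gcd(t-1,n)$, with $n$ even requiring a detailed study of $\varphi$'s action on the index-$2$ subgroup $\langle a^2 \rangle$.
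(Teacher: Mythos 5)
Your overall architecture matches the paper's: normalize so that $x_0=a^{-1}$, $x_1=a$ (legitimate, since the remark after Corollary~\ref{non-bal-ref-auto} shows \emph{every} element of $X\cap A_n$ satisfies $p(x)=x^{-1}$ when $r(\M)=1$, so the generator from Lemma~\ref{cyclic-generator} can be rotated to position $0$), then split according to whether $x_2$, and subsequently $x_3$, lies in $A_n$ or $B_n$; your two final cases land on $\M_4$ and $\M_5$ exactly as the paper's do. However, several of the pivotal steps are asserted rather than proved, and at least one of the mechanisms you cite does not close as described.

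First, your exclusion of $x_2=a^t\in A_n$ rests on an injectivity comparison that only works when $t-1$ is invertible mod $n$: solving $k(t-1)\equiv 1\pmod n$ is impossible when $\gcd(t-1,n)>1$, and for $n$ even with $\gcd(t-1,n)=2$ the map $a^{2k}\mapsto a^{k(t-1)}$ can be a genuine bijection of $\langle a^2\rangle$, so injectivity yields no contradiction there. The argument that actually closes this case (and is essentially what the paper does, by a slightly different induction) is that $\varphi(a^{2k})=a^{k(t-1)}$ together with $\varphi(a^{2k+1})=a^{k(t-1)+t}$ shows $\varphi(A_n)\subseteq A_n$, whence the $\langle\varphi\rangle$-orbit $X$ of $a^{-1}$ lies entirely in $A_n$, contradicting that $X$ generates $D_n$. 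Second, in your Case A the claim that equating $\varphi(a\cdot a^2)$ with $\varphi(a^2\cdot a)$ ``forces $d=4$'' is not substantiated: that identity reads $b\,\varphi(a^{-v})=a^{-v}x_4$, which still contains the two unknowns $\varphi(a^{-v})$ and $x_4$ and does not by itself bound $d$. The paper instead derives a contradiction for $d\ge 5$ by expanding $\varphi$ on products of the form $a^{k_1}b\cdot a^{-1}$ versus $a\cdot a^{k_1}b$, which forces $x_{d-3}=x_1$; some computation of this kind is needed and is missing from your sketch. Third, in Case B the steps ``injectivity forces $a^{2u}=a^2$'', ``$x_4=a^{-u}$'', and ``reflection forces $d=6$'' are all conclusions, not arguments: the paper obtains $2u\equiv 2$, and then $d=6$, from the identity $\varphi^{-4}(x_{d-2})=x_0$ (using $\pi(x_2)=d-4$), and obtains the final congruences $3\mid n$ (Case A) and $n\equiv 4\pmod 8$ (Case B) from a covalency computation and a $\ker(\pi)$ computation, respectively — precisely the part you flag as ``the main technical obstacle'' but do not carry out. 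So the proposal is a correct roadmap with the right case division and endpoints, but it has genuine gaps at the steps that do the real work.
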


\begin{proof}
Let ${\cal M} = CM(D_n, X, p)$ be a reflexible  regular
Cayley map for $D_n$ with $p=(x_0,x_1, \ldots, x_{d-1})$ satisfying
$x_0^{-1} = x_1$.

For the first consideration, let $d =4$. By Proposition~\ref{iso} and
Lemma~\ref{cyclic-generator}, we can assume that
$p=(a^{-1},a, b, a^kb)$ for some $k$.  Now $\pi(a)=\pi(a^kb)=2, \ \pi(b)=1$ and $\pi(a^{-1})=3$.
So $\varphi(ab)=\varphi(a)\varphi^2(b) = ba^{-1}=ab$ and $\varphi(ba^{-1}) = \varphi(b)\varphi(a^{-1})=a^kba=a^{k-1}b$. Since $ab=ba^{-1}$, we have $k=2$. The covalency (face size) of ${\cal M}$ is the order $n$ of $a$, which equals to $3$ times the order of $a^{-1}ba^2b=a^{-3}$. This implies that $n$ is a multiple of $3$ and hence ${\cal M}$ is isomorphic to  ${\cal M}_4$.

Assume that $d \ge 5$. Note that if $x_i \in A_n$ then $x_i^{-1} = x_{d+1-i}$, and  if $x_i \in B_n$ then $x_{d+1-i}$ also belongs to $B_n$.  Our discussion can be divided into the following cases.
\medskip

\noindent{\it Case 1.} Suppose that $x_{d-1}, x_2 \in A_n$, namely $x_{d-1}^{-1} = x_2$. Let $x_1=a^i$ and $x_2=a^j$. Now $x_{d-1}=a^{-j}$ and $x_0=a^{-i}$. This implies that $\pi(a^{-j})=\pi(a^{-i})=\pi(a^i)=-1$. Hence
$$\varphi(a^{-j-i}) =\varphi(a^{-j})\varphi^{-1}(a^{-i})=a^{-i-j} \ \ \mbox{and} \ \  \varphi(a^{-i-j}) = \varphi(a^{-i})\varphi^{-1}(a^{-j})=a^i \varphi^{-1}(a^{-j}).
$$
So $\varphi^{-1}(a^{-j}) = x_{d-2} = a^{-2i-j}$, which implies that $x_3 = x_{d-2}^{-1} = a^{2i+j}$ and $\pi(a^{-2i-j})=-1$. Now we have
\begin{eqnarray*} \varphi(x_{d-2}x_{d-1})&=& \varphi(a^{(-2i-j)-j}) =\varphi(a^{-2i-j})\varphi^{-1}(a^{-j})=a^{-j+(-2i-j)} \ \ \mbox{and} \\  \varphi(a^{-j+(-2i-j)}) &=&\varphi(a^{-j})\varphi^{-1}(a^{-2i-j})=a^{-i} \varphi^{-1}(a^{-2i-j})\end{eqnarray*}
 and hence $\varphi^{-1}(a^{-2i-j}) = x_{d-3} = a^{-i-2j}$. By continuing the similar process, one can get that for any $x_k \in X$, $x_k \in A_n$. It contradicts that $X$ is a generating set of $D_n$.
\medskip

\noindent{\it Case 2.} Suppose that $x_{d-2}, x_{d-1}, x_2, x_3 \in B_n$. Then $\pi(x_{d-2})=\pi(x_2)=1$, $\pi(x_{d-1})=\pi(x_1)=2$ and $\pi(x_0)=-1$. By Proposition~\ref{iso}, one can assume that $x_{d-1}=b$. Let $x_{d-2}=a^{k_1}b$ and $x_2 = a^{k_2}b$. Now
$$ \varphi(a^{k_1}ba^{-i}) =\varphi(a^{k_1}b)\varphi(a^{-i})=ba^i \ \ \mbox{and} \ \   \varphi(a^ia^{k_1}b) = \varphi(a^i)\varphi^2(a^{k_1}b)=a^{k_2}ba^{-i},
$$
 which implies that  $k_2 = -2i$. Since
\begin{eqnarray*}  \varphi(a^{k_1}ba^i) &=& \varphi(a^{k_1}b)\varphi(a^i)=ba^{k_2}b = a^{-k_2}=a^{2i} \ \ \mbox{and} \\   \varphi(a^{-i}a^{k_1}b) &=& \varphi(a^{-i})\varphi^{-1}(a^{k_1}b)=a^i\varphi^{-1}(a^{k_1}b),
\end{eqnarray*}
we have $\varphi^{-1}(a^{k_1}b) = x_{d-3} = a^i=x_1$, which is a contradiction.
\medskip

\noindent{\it Case 3.} Suppose that $x_{d-1},x_2 \in B_n$ and $x_{d-2}, x_3 \in A_n$. Then $\pi(x_{d-2})=\pi(x_2)=-4$, $\pi(x_{d-1})=\pi(x_1)=2$ and $\pi(x_0)=-1$.  Let $x_{d-1}=a^{k_1}b, \ x_2 = a^{k_2}b$ and $x_3 = x_{d-2}^{-1}=a^j$. Now
$$\varphi(a^{-i}a^{k_1}b) =\varphi(a^{-i})\varphi^{-1}(a^{k_1}b)=a^ia^{-j} \ \ \mbox{and} \ \   \varphi(a^{k_1}ba^i) = \varphi(a^{k_1}b)\varphi^2(a^i)=a^{-i}a^j,
$$
 which implies that  $2i=2j$. Furthermore we have
$$\varphi(a^{-2i}) =\varphi(a^{-i})\varphi^{-1}(a^{-i})=a^ia^{k_1}b \ \ \mbox{and} \ \
 \varphi(a^{-2j}) =\varphi(a^{-j})\varphi^{-4}(a^{-j})=a^{k_1}b \varphi^{-4}(a^{-j}).
$$
Since $a^{-2i}=a^{-2j}$, we have $\varphi^{-4}(a^{-j})= x_{d-6} =  a^{-i}=x_0$. This implies that $d=6$ and $p=(a^{-i}, a^i ,  a^{k_2}b, a^j, a^{-j}, a^{k_1}b)$. Since $2i=2j$, $A_n =\langle a^i \rangle = \langle a^j \rangle$ be Lemma~\ref{cyclic-generator}. By Proposition~\ref{iso},  we can assume that $p=(a^{-1}, a ,  b, a^j, a^{-j}, a^{k_1}b)$ up to isomorphism. Since $2j=2i=2$, $n$ is even and $j = 1+\frac{n}{2}$. Note that $\pi(a^{1+\frac{n}{2}})=-1$. Since
$$\varphi(a^{1+(1 +\frac{n}{2})}) =\varphi(a)\varphi^{2}(a^{1 +\frac{n}{2}})=ba^{k_1}b \ \ \mbox{and} \ \
\varphi(a^{(1 +\frac{n}{2})+1}) =\varphi(a^{1 +\frac{n}{2}})\varphi^{-1}(a)=a^{-1+\frac{n}{2}}a^{-1} ,$$
$k_1 = 2+ \frac{n}{2}$. The covalency (face size) of ${\cal M}$ is the order $n$ of $a$, which equals to $4$ times the order of $a^{-1}\cdot b\cdot a^{1+\frac{n}{2}}\cdot a^{2+\frac{n}{2}}  b=a^{-4}$. This implies that $n$ is a multiple of $4$.   Note that $\pi(a)=2$ and by Proposition~\ref{properties-skew}, \begin{eqnarray*} \pi(a^2)&=& \sum_{i=0}^{\pi(a)-1} \pi(\varphi^i(a))= \pi(a)+\pi(b) = 4,  \\
\pi(a^3)&=& \sum_{i=0}^{\pi(a^2)-1} \pi(\varphi^i(a))= \pi(a)+\pi(b)+\pi(a^{ 1+\frac{n}{2}})+\pi(a^{ -1+\frac{n}{2}}) = 5 \ \ \mbox{and}  \\
 \pi(a^4)&=& \sum_{i=0}^{\pi( a^3)-1} \pi(\varphi^i(a))= \pi(a)+\pi(b)+\pi(a^{ 1+\frac{n}{2}})+\pi(a^{ -1+\frac{n}{2}}) + \pi(a^{2+ \frac{n}{2}}b)= 1. \end{eqnarray*}
Hence $\langle a^4 \rangle$ is the cyclic subgroup of $\ker(\pi)$ of
the maximum order.  Since $\pi(a) = \pi(a^{ -1+\frac{n}{2}})$, both
$a$ and $a^{ -1+\frac{n}{2}}$ belong to the same right coset of
$\ker(\pi)$, namely
  $a^{-2+\frac{n}{2}} \in \langle a^4 \rangle $. This implies that $\frac{n}{2} \equiv 2 (\mod 4)$,  namely $n \equiv 4 (\mod 8)$, and hence ${\cal M}$ is isomorphic to  ${\cal M}_5$.
\end{proof}

\begin{lemma} \label{r=2}
If  ${\cal M} = CM(D_n, X, p)$ is a reflexible regular  Cayley map
for the dihedral group $D_n$ whose rotation index $r({\cal M})$ is $2$, then ${\cal M}$ is isomorphic to  ${\cal M}_2$ or ${\cal M}_3$ in Theorem~\ref{main-result}.
\end{lemma}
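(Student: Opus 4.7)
The plan is to use the partially inverting reflection $\alpha$ from Corollary~\ref{non-bal-ref-auto} to pin down $p$ up to equivalence. Since $r(\M)=2$, there is $x_i\in X\cap A_n$ with $c(i)=i+2$, so by Proposition~\ref{iso}, after cyclically reindexing $p$ we may assume $i=0$, $x_0\in A_n$ and $x_2=x_0^{-1}$. Since $0+c(0)=2$ is the common value of $i+c(i)$ over all $x_i\in X\cap A_n$, the reflection $\alpha$ acts on the cycle by the mirror rule $\alpha(x_k)=x_{2-k}$ for every $k\in\ZZ_d$; in particular $\alpha(x_1)=x_1$, so $x_1$ has order at most $2$ in $D_n$. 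Since every element of $B_n$, as well as $a^{n/2}$, is an involution, we get $c(1)=1$ and hence $\pi(x_0)=\pi(x_1)=-1$ in $\ZZ_d$.

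I then split on whether $x_1\in A_n$ or $x_1\in B_n$. In the former case $x_1=a^{n/2}$, $n$ is even, and $x_0,x_1,x_2$ all lie in $A_n$; applying the skew-morphism identity to $x_0\cdot x_0$ and iterating with the formula $\pi(gh)=\sum_{i=0}^{\pi(g)-1}\pi(\varphi^i(h))$ quickly yields $\varphi(a^k)=1$ for some $0<k<n$, contradicting injectivity of $\varphi$. Hence $x_1\in B_n$. A subsequent application of $\sigma_{1,j}\in\Aut(D_n)$, combined if needed with switching to a different index achieving $r=2$, normalizes $x_1=b$ so that $\alpha$ is the standard reflection $a\mapsto a^{-1},\ b\mapsto b$. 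Using Lemma~\ref{cyclic-generator} and $\sigma_{s^{-1},0}$, I may further take $x_0=a$, giving $(x_0,x_1,x_2)=(a,b,a^{-1})$.

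With these starting data fixed, I would inductively determine $x_3,x_4,\ldots$ and the associated $\pi$-values by combining the skew-morphism identity on products such as $a\cdot a$ and $a\cdot b$ with the mirror rule $x_{2-k}=\alpha(x_k)$. This yields $\varphi(a^2)=a^{-1}b$, forces the $A_n$ and $B_n$ elements to alternate along $p$, identifies $X\cap A_n=\{a,a^3,\ldots,a^{n-1}\}$, and gives the valency $d=n$. The remaining freedom lies in the exponent pattern of the $B_n$-entries, and solving the recursion imposed by the skew-morphism identity together with the closure condition $\sum_{k=0}^{d-1}\pi(x_k)\equiv 0\pmod d$ yields exactly two possibilities: constant increments of $2$, which recovers $p_2$ and identifies $\M$ with $\M_2$ (requiring only $n$ even); or alternating increments of $2$ and $m+2$ with $m=n/2$, which recovers $p_3$ and identifies $\M$ with $\M_3$, with the closure condition forcing $4\mid m$, i.e.\ $8\mid n$. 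The main technical obstacle is the exponent recursion in this final step, which requires careful bookkeeping of $\pi$-values and exploitation of $\alpha$-invariance to halve the number of unknowns; once the pattern is classified, identification with $\M_2$ or $\M_3$ is immediate and reflexibility is already witnessed by $\alpha$.
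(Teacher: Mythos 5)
Your skeleton is the paper's: normalize the initial segment of $p$ using Proposition~\ref{iso} and the partially inverting reflection, then propagate along the cycle with the skew-morphism identity until all of $p$ is determined, splitting into the two exponent patterns that give ${\cal M}_2$ and ${\cal M}_3$. But the steps you compress are exactly the ones that carry the weight of the paper's proof, and two of them are genuine gaps. First, the claim that the computation ``forces the $A_n$ and $B_n$ elements to alternate along $p$'' assumes what must be proved: after fixing $(x_0,x_1,x_2)=(a^s,b,a^{-s})$, the entries $x_3$ and $x_{d-1}=\alpha(x_3)$ could a priori both lie in $A_n$, and this configuration has to be excluded before any induction can begin. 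The paper spends a full paragraph on it (its case $x_{d-2}^{-1}=x_2$), deriving $2i\equiv 2j$ and then $4j\equiv 0\pmod n$, which forces two distinct entries of $p$ to coincide. Nothing in your sketch addresses this. Second, the normalization $x_0=a$ is not available where you invoke it: Lemma~\ref{cyclic-generator} guarantees that \emph{some} element of $X$ generates $A_n$, not that the entry realizing $r(\M)=2$ does. The paper instead carries $x_0=a^i$ through the entire induction, shows $X\cap A_n\subseteq\langle a^i\rangle$, and only then concludes $\gcd(i,n)=1$.

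Two further corrections. Your instinct to exclude $x_1=a^{n/2}$ is sound (the paper simply posits that the mirror-fixed entry lies in $B_n$), but the contradiction you announce is not the one that materializes: iterating the skew-morphism identity in that configuration shows that every $x_k$ lies in $A_n$, contradicting that $X$ generates $D_n$; there is no evident route to $\varphi(a^k)=1$. Also, the ``closure condition'' $\sum_{k}\pi(x_k)\equiv 0\pmod d$ is vacuous, since $\pi(x_k)=c(k+1)-c(k)$ telescopes to $0$ around the cycle; the congruences $2\mid n$ for ${\cal M}_2$ and $8\mid n$ for ${\cal M}_3$ actually come from requiring the closed formulas $x_{2j}=a^{2ji+jm}b$ and $x_{2j+1}=a^{(2j+1)i}$ to be consistent modulo $d=n$ and the entries of $p$ to be pairwise distinct (for instance $n\equiv 4\pmod 8$ is excluded because it would give $x_m=b=x_0$). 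With these four points repaired, your argument becomes the paper's proof.
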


\begin{proof}
Let ${\cal M} = CM(D_n, X, p)$ be a reflexible  regular
Cayley map for $D_n$  with $p=(x_0,x_1, \ldots, x_{d-1})$
satisfying $x_0 \in B_n$ and $x_{d-1}^{-1} = x_1$.
 By Proposition~\ref{iso}, we can assume that $x_0 =b$. Let $x_1 =x_{d-1}^{-1} =a^i$.
 Since ${\cal M}$ is regular and reflexible, there exists an orientation-reversing map automorphism $\psi$ of $D_n$ such that $\psi(x_k)=x_{-k}$ for any $x_k \in X$. By Corollay~\ref{non-bal-ref-auto}, $\psi$ is also a group automorphism. Note that $\psi(a)=a^{-1}$ and $\psi(b)=b$. This implies that for any $x_k \in X$, if $x_k = a^j$ then $x_{-k} = a^{-j}$, and if $x_k = a^jb$ then $x_{-k} = a^{-j}b$.

  Suppose that $x_{d-2}^{-1}=x_2$. Let $x_2 = x_{d-2}^{-1}=a^j$. Now $\pi(a^{-j})=\pi(a^{-i}) = \pi(b)=\pi(a^i)=-1$ and hence
  $$\varphi(a^ib) = \varphi(a^i)\varphi^{-1}(b)=a^ja^{-i}=a^{j-i} \ \ \mbox{and} \ \  \varphi(ba^{-i}) = \varphi(b)\varphi^{-1}(a^{-i})=a^{i-j}.$$
 So $2i \equiv 2j (\mod n)$. Since $a^{-2i}=a^{-2j}$ and
 $$\varphi(a^{-2i}) = \varphi(a^{-i})\varphi^{-1}(a^{-i})=ba^{-j} \ \ \mbox{and} \ \  \varphi(a^{-2j}) = \varphi(a^{-j})\varphi^{-1}(a^{-j})=a^{-i}\varphi^{-1}(a^{-j}),$$
 we have $\varphi^{-1}(a^{-j})=a^{i+j}b$. Also
  $$\varphi(a^{-j+i}) = \varphi(a^{-j})\varphi^{-1}(a^i)=a^{-i}b \ \ \mbox{and} \ \  \varphi(a^{i-j}) = \varphi(a^{i})\varphi^{-1}(a^{-j})=a^j\varphi^{-1}(a^{-j}),$$
 which means that $\varphi^{-1}(a^{-j})=a^{-i-j}b$. So we have $2j \equiv 2i \equiv -2j (\mod n)$, and hence $4j \equiv 0 (\mod n)$. This implies that $\{a^{-i}, a^i \} = \{a^{-j}, a^j \} =\{a^{\frac{n}{4}}, a^{\frac{3n}{4}} \}$, a contradiction.

 Now we can assume that $x_{d-2}, x_2 \in B_n$. Let $x_{d-2} = a^{-k_1}b$ and $x_2 = a^{k_1}b$. Note that $\pi(a^{-i})=\pi(b)=-1$ and $\pi(a^{-k_1}b)=\pi(a^i)=3$. Since
 $$\varphi(a^{-k_1}bb)=\varphi(a^{-k_1}b)\varphi^3(b)=a^{-i}\varphi(a^{k_1}b) \ \ \mbox{and} \ \ \varphi(ba^{k_1}b)=\varphi(b)\varphi^{-1}(a^{k_1}b)=a^{2i},$$
 we have $\varphi(a^{k_1}b)=a^{3i}$, namely $x_3=a^{3i}$ and
 $x_{d-3}=a^{-3i}$.  Hence we have
 $$\varphi(a^ib)=\varphi(a^i)\varphi^3(b)=a^{k_1}ba^{3i} \ \ \mbox{and} \ \ \varphi(ba^{-i})=\varphi(b)\varphi^{-1}(a^{-i})=a^ia^{-k_1}b.$$
 This implies that $2k_1 = 4i$, namely $k_1 = 2i$ or $2i+\frac{n}{2}$ with even $n$. Now our discussion can be divided into the following two
 cases.
\medskip

\noindent{\it Case 1.} Suppose that $k_1 = 2i$. Assume that there exists a positive integer $k$ with $k \ge 1$ such that for non-negative integer $j$ with $j \le k$, $x_{-2j} = a^{-2ji}b, x_{2j}=a^{2ji}b$ and $x_{-2j-1} = a^{(-2j-1)i}, x_{2j+1}=a^{(2j+1)i}$. Then we have $\pi(a^{-2ji}b)=\pi(a^{(2j-1)i})=4j-1$ and $\pi(a^{2ji}b)=\pi(a^{(-2j-1)i})=-4j-1$, and so
\begin{eqnarray*}
\varphi(a^i a^{(2k-1)i}) &=& \varphi(a^i)\varphi^3(a^{(2k-1)i})= a^{2i}b x_{2k+2} \ \ \mbox{and} \\ \varphi(a^i a^{(2k-1)i}) &=& \varphi(ba^{(-2k)i}b) = \varphi(b)\varphi^{-1}(a^{(-2k)i}b)= a^ia^{(-2k-1)i}
\end{eqnarray*}
Now we have $x_{2k+2} = a^{(2k+2)i}b$, and hence $x_{-2k-2}= a^{(-2k-2)i}b$. Since
\begin{eqnarray*}
\varphi(a^i a^{2ki}b) &=& \varphi(a^i)\varphi^3(a^{2ki}b)= a^{2i}bx_{2k+3} \ \  \mbox{and} \\  \varphi(a^i a^{2ki}b)&=& \varphi(ba^{(-2k-1)i}) = \varphi(b)\varphi^{-1}(a^{(-2k-1)i})= a^ia^{(-2k-2)i}b,
\end{eqnarray*}
 we have  $x_{2k+3}= a^{(2k+3)i}$ and $x_{-2k-3} = a^{(-2k-3)i}$.  By induction, for any integer $j$, $x_{2j} = a^{2ji}b$ and $x_{2j+1} = a^{(2j+1)i}$. For ${\cal M}$ to be well-defined, $n$ should be even. Since $X$ is a generating set of $D_n$, the greatest common divisor of $i$ and $n$ is 1. By Proposition~\ref{iso}, we can assume that $i=1$ up to isomorphism, and hence ${\cal M}$ is isomorphic to  ${\cal M}_2$.
\medskip

\noindent{\it Case 2.} Suppose that $k_1 = 2i+\frac{n}{2}$ with even
$n$. Let $m= \frac{n}{2}$. Assume that there exists a positive
integer $k$ with $k \ge 1$ such that for non-negative integer $j$
with $j \le k$, $x_{-2j} = a^{-2ji+jm}b, x_{2j}=a^{2ji+jm}b$ and
$x_{-2j-1} = a^{(-2j-1)i}, x_{2j+1}=a^{(2j+1)i}$. Then we have
$\pi(a^{-2ji+jm}b)=\pi(a^{(2j-1)i})=4j-1$ and
$\pi(a^{2ji+jm}b)=\pi(a^{(-2j-1)i})=-4j-1$, and so
\begin{eqnarray*}
\varphi(a^i a^{(2k-1)i}) &=& \varphi(a^i)\varphi^3(a^{(2k-1)i})= a^{2i+m}b x_{2k+2} \ \ \mbox{and} \\ \varphi(a^i a^{(2k-1)i}) &=& \varphi(a^{-i} a^{(2k+1)i}) = \varphi(a^{-i})\varphi^{-1}(a^{(2k+1)i})= ba^{2ki+km}b,
\end{eqnarray*}
which implies that $x_{2k+2} = a^{(2k+2)i+(k+1)m}b$ and  $x_{-2k-2}= a^{(-2k-2)i-(k+1)m}b$. Since
\begin{eqnarray*}
\varphi(a^{-2i+m}b a^{2ki+km}b) &=& \varphi(a^{-2i+m}b)\varphi^3(a^{2ki+km}b)= a^{-i}x_{2k+3} \ \  \mbox{and} \\  \varphi(a^{-2i+m}b a^{2ki+km}b)&=& \varphi(ba^{(2k+2)i+(k+1)m}b) = \varphi(b)\varphi^{-1}(a^{(2k+2)i+(k+1)m}b)= a^ia^{(2k+1)i},
\end{eqnarray*}
 we have  $x_{2k+3}= a^{(2k+3)i}$ and  $x_{-2k-3} = a^{(-2k-3)i}$.  By induction, for any integer $j$, $x_{2j} = a^{2ji+jm}b$ and $x_{2j+1} = a^{(2j+1)i}$.
 Since $X$ is a generating set of $D_n$, the greatest common divisor of $i$ and $n$ is 1. By Proposition~\ref{iso}, we can assume that $i=1$ up to isomorphism.
  Note that the valency $d$ of ${\cal M}$ is $n$. For ${\cal M}$ to be well-defined, $n$ should be a multiple of $4$. If $n \equiv 4 (\mod 8)$, then $x_m = a^{m+\frac{m}{2}m}b=a^{2m}b=b$,
  a contradiction. Therefore $n$ is a multiple of $8$ and hence ${\cal M}$ is isomorphic to  ${\cal M}_3$.
\end{proof}

\begin{lemma} \label{r=3}
If  ${\cal M} = CM(D_n, X, p)$ is a reflexible regular  Cayley map
for the dihedral group $D_n$ whose rotation index $r({\cal M})$ is $3$, then ${\cal M}$ is isomorphic to  ${\cal M}_6$ in Theorem~\ref{main-result}.
\end{lemma}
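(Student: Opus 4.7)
First I would invoke Lemma~\ref{cyclic-generator} and Proposition~\ref{iso} to place a generator of $A_n$ inside $X$ and, by choosing the basepoint of the cycle realizing $r(\M)=3$, set $x_2=a$ and $x_5=a^{-1}$; a further automorphism of $D_n$ fixing $a$ then normalizes $x_0=b$. By Corollary~\ref{non-bal-ref-auto}, the partially inverting reflection $\alpha$ satisfies $\alpha(a)=a^{-1}$ and $\alpha(b)=a^{t}b$ for some $t\in\ZZ_n$, and induces on indices the involution $\sigma(k)=7-k\pmod d$. Thus $x_1=\alpha(x_0)=a^{t}b$, and writing $x_3=a^{p}b$ we also have $x_4=\alpha(x_3)=a^{t-p}b$.

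The first substantive task is to show $d=6$. The lower bound $d\geq 6$ follows from $r(\M)=3$ at position $5$, which yields $c(5)-5\equiv d-3\geq 3\pmod d$. For the upper bound I would assume $d\geq 7$ and derive a contradiction. Rotation-index considerations at positions $1,3,4$, combined with the constraint $c(k)+k\equiv 7\pmod d$, rule out $x_3,x_4\in A_n$ directly and rule out $x_1\in A_n$ for all but finitely many residues of $d$; the residual cases (where $x_1\in A_n$) are eliminated by a short skew-morphism calculation that exhibits an unwanted fixed point of $\varphi$, in the spirit of the case analysis in Lemma~\ref{r=2}. Once $x_1\in B_n$ is established, the identity $\sigma(6)=1\pmod d$ forces $\alpha(x_6)=x_1=a^{t}b$; writing $x_6=a^{u}b$, the equation $\alpha(a^{u}b)=a^{t-u}b=a^{t}b$ gives $u\equiv 0\pmod n$ and hence $x_6=b=x_0$, a contradiction. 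Therefore $d=6$.

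With $d=6$, I would compute $\varphi(ab)$ in two ways using $\pi(a)=4$, $\pi(b)=1$, $\varphi^{4}(b)=x_4$, and $\varphi(a^{-1})=x_0=b$: this yields $(a^{p}b)(a^{t-p}b)=a^{2p-t}$ on one side and $(a^{t}b)b=a^{t}$ on the other, so $2p\equiv 2t\pmod n$. The case $p\equiv t$ collapses $x_4=x_0$, so $n=2m$ is even and $p\equiv t+m$, giving $x_4=a^{m}b$. Iterating the skew-morphism identity, using $\pi(a^{2})=1$ derived from Proposition~\ref{properties-skew}(3), produces the closed forms $\varphi(a^{2k})=a^{kp}$, $\varphi(a^{2k+1})=a^{(k+1)p}b$, $\varphi(a^{2k}b)=a^{kp+t}b$, and $\varphi(a^{2k+1}b)=a^{(k+1)p-m}$. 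Matching $\varphi(a^{-1})=b$ forces $p$ to be even, so $t$ and $m$ share parity; matching $\varphi(a^{t}b)=a\in A_n$ against the formula $\varphi(a^{2k}b)\in B_n$ forces $t$ odd, so $m$ is odd and $n\equiv 2\pmod 4$, i.e.\ $n=4k+2$. Finally, $\varphi(a^{p}b)=a^{m}b$ combined with $p=t+m$ reduces to $(t+m+1)^{2}\equiv 1\pmod{4m}$, and the unique solution keeping the six elements of $X$ distinct is $t\equiv m-2\pmod n$. Hence $X$ and $p$ coincide with $X_{6}$ and $p_{6}$, so $\M\cong\M_{6}$ by Proposition~\ref{iso}. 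The main obstacle is the valency step, which needs both the reflection identity $\alpha(x_6)=x_1$ and a fixed-point argument for the residual cases $x_1\in A_n$.
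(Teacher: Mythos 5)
Your overall architecture (normalize the cycle, prove $d=6$, then solve for the parameters) matches the paper, and your second half --- computing $\varphi(ab)$ two ways to get $2p\equiv 2t\pmod n$, forcing $n=2m$ even and $x_4=a^m b$, then extracting $t\equiv m-2$ and $m$ odd from the closed forms and the kernel --- is essentially the paper's computation in different notation. The problem is the valency step, which you yourself flag as the main obstacle: as written it does not work. First, with the basepoint chosen so that $x_2=a$ and $x_5=a^{-1}$, the minimality of $r(\M)=3$ only forces the two positions \emph{between} $a$ and $a^{-1}$, namely $x_3$ and $x_4$, into $B_n$; it does not force $x_0\in B_n$ (for $d\ge 10$ the index of an $A_n$-element at position $0$ would be $\min(7,d-7)\ge 3$), so the normalization $x_0=b$ is unjustified for large $d$. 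Second, and more seriously, since $\alpha(x_2)=x_5$ the induced involution is $\sigma(k)=7-k\pmod d$, so $\alpha(x_0)=x_{7}$, not $x_1$; the identity $x_1=\alpha(x_0)=a^tb$ that drives your $d\ge 7$ contradiction requires $7\equiv 1\pmod d$, i.e., it presupposes $d=6$. With the correct index one only gets $x_7=a^tb$ and $x_6=\alpha(x_1)$, from which no contradiction follows. Third, the elimination of $x_1\in A_n$ (genuinely possible for $d\ge 8$ on index grounds) is only gestured at.

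The paper sidesteps all of this by anchoring the basepoint at the two positions that \emph{are} forced into $B_n$: it puts $a^{-i}$ at position $d-1$ and $a^i$ at position $2$, so that $x_0=b$ and $x_1=a^kb$ are legitimately normalized, and then examines $x_3$ and $x_{d-2}$ (which lie in the same part $A_n$ or $B_n$ because the reflection swaps them). The case $x_3,x_{d-2}\in A_n$ dies by a direct two-way evaluation of $\varphi$, and in the case $x_3,x_{d-2}\in B_n$ the same kind of computation yields $\varphi(a^{k_2}b)=a^{k_1}b$, i.e., $p(x_3)=x_{d-2}$, so $x_4=x_{d-2}$ and $d=6$ falls out immediately --- no contradiction argument for $d\ge 7$ and no reflection-index bookkeeping at positions $0,1,6$ is needed. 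If you re-anchor your normalization this way (so that $\sigma(k)=1-k$ and the two forced $B_n$-elements are $x_0,x_1$), your subsequent computations go through and recover the paper's proof.
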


\begin{proof}
Let ${\cal M} = CM(D_n, X, p)$ be a reflexible  regular
Cayley map for $D_n$ with $p=(x_0,x_1, \ldots, x_{d-1})$
 and $r({\cal M})=3$. By Proposition~\ref{iso}, let
$x_0=b, \ x_1=a^kb$ and $x_2=x_{d-1}^{-1} = a^i$.  Now
$\pi(a^{-i})=\pi(a^kb)=-2$ and $\pi(b)=1$.

 Suppose that $x_{d-2}, x_3 \in A_n$, namely $x_{d-2}^{-1} = x_3$. Let $x_3=x_{d-2}^{-1} = a^j$. Now $\pi(a^{-j})=\pi(a^i) = -1$ and we have
\begin{eqnarray*} \varphi(a^{-i}b)&=& \varphi(a^{-i})\varphi^{-2}(b)=ba^{-j} \ \ \mbox{and}  \ \   \varphi(a^{-i}b) = \varphi(ba^i)=  \varphi(b)\varphi(a^i)=a^kb a^j \\
 \varphi(a^ib)&=& \varphi(a^i)\varphi^{-1}(b)=a^ja^{-i} \ \ \mbox{and}  \ \   \varphi(a^ib) = \varphi(ba^{-i})=  \varphi(b)\varphi(a^{-i})=a^kb b=a^k.
\end{eqnarray*}
So it holds that $k=2j$ and $k=j-i$. This implies that $j=-i$, a contradiction.

We can assume that $x_{d-2}, x_3 \in B_n$. Let $x_{d-2}=a^{k_1}b$ and
$x_3=a^{k_2}b$. Now $\pi(a^{k_1}b)=\pi(a^i)=4$ and we have
\begin{eqnarray*} \varphi(a^{-i}b)&=& \varphi(a^{-i})\varphi^{-2}(b)=ba^{k_1}b =a^{-k_1}\ \ \mbox{and}  \ \   \varphi(ba^i)=  \varphi(b)\varphi(a^i)=a^kb a^{k_2}b=a^{k-k_2} \\
 \varphi(a^ib)&=& \varphi(a^i)\varphi^{4}(b)=a^{k_2}b\varphi(a^{k_2}b) \ \ \mbox{and}  \ \   \varphi(ba^{-i})=  \varphi(b)\varphi(a^{-i})=a^kb b=a^k.
\end{eqnarray*}
So $k_1=k_2-k$ and $\varphi(a^{k_2}b)=a^{k_2 -k}b=a^{k_1}b$. This
implies that $d=6$ and $p = (b, a^kb, a^i, a^{k_2}b, a^{k_1}b,
a^{-i})$. By Lemma~\ref{cyclic-generator}, $\langle a^i \rangle =
A_n$ and hence the greatest common divisor  of $n$ and
$i$ is $1$. By Proposition~\ref{iso}, we can assume that $i=1$ up to
isomorphism. Now $\pi(a)=\pi(a^{-1})=\pi(a^kb)=\pi(a^{k_1}b)=-2$ and
$\pi(b)=\pi(a^{k_2}b)=1$. By Corollary~\ref{non-bal-ref-auto}, there
exists an automorphism $\psi$ of $D_n$ such that $\psi(a)=a^{-1}$
and $\psi(b)=a^kb$, and this automorphism $\psi$ sends $a^{k_1}b$ to
$a^{k_2}b$. This means that $k_2 = -k_1 +k$. Since $k_1=k_2-k$, we
have $2k_1 \equiv 0 (\mod n)$ and hence $n$ is even and $k_1 =
\frac{n}{2}$. Since
$$\varphi(a^{\frac{n}{2}}bb)= \varphi(a^{\frac{n}{2}}b)\varphi^{-2}(b)=a^{-1}a^{\frac{n}{2}}b \ \ \mbox{and}  \ \   \varphi(ba^{\frac{n}{2}}b)=  \varphi(b)\varphi(a^{\frac{n}{2}}b)=a^kb a^{-1},$$
we have $k=\frac{n}{2}-2$, which means $k_2 = -2$ and hence $p = (b,
a^{\frac{n}{2}-2}b, a, a^{-2}b, a^{\frac{n}{2}}b, a^{-1})$. Since
$\pi(b)=\pi( a^{-2}b ) = 1$, we have $\ker(\pi) =
\langle a^2, b \rangle$. By the fact $a^{\frac{n}{2}}b \notin
\ker(\pi)$, we know that $\frac{n}{2}$ is odd, so $n \equiv 2 (\mod
4)$, and hence ${\cal M}$ is isomorphic to  ${\cal M}_6$.
\end{proof}

In the following lemma, we will show that there is no other reflexible regular Cayley map for $D_n$ except all Cayley maps listed in Theorem~\ref{main-result}.

\begin{lemma} \label{r>3}
There is no reflexible  regular  Cayley map for the
dihedral group $D_n$ whose rotation index $r({\cal M})$ is greater
than $3$.
\end{lemma}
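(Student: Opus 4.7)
The plan is to argue by contradiction: suppose ${\cal M} = CM(D_n, X, p)$ is a reflexible regular Cayley map with $r({\cal M}) = r \ge 4$. Since $r({\cal M})$ is defined only when $X \cap A_n \ne \emptyset$, the map is non-balanced, and Corollary~\ref{non-bal-ref-auto} supplies a partially inverting reflection $\alpha \in \Aut(D_n)$ with $\alpha(a) = a^{-1}$, $\alpha(X) = X$, and $\alpha(p(x)) = p^{-1}(\alpha(x))$ for every $x \in X$. The last identity, applied along $p = (x_0, x_1, \ldots, x_{d-1})$, forces $\alpha(x_k) = x_{c - k \bmod d}$ for some fixed $c \in \ZZ_d$, with $\alpha$ inverting every element of $X \cap A_n$ and preserving the $A_n/B_n$ partition of $X$. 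I would then cyclically relabel $p$ so that an inverse pair realizing $r({\cal M})$ sits at positions $r-1$ and $d-1$; this gives $x_{r-1} = a^i$ and $x_{d-1} = a^{-i}$ for some $i$, and $c \equiv r - 2 \pmod d$. Taking the ``short way'' around this pair forces $d \ge 2r$.

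Next, I would use the symmetry $\alpha(x_k) = x_{r-2-k}$ to control the short arc $x_0, x_1, \ldots, x_{r-2}$. If some $x_j$ in this range lay in $A_n$, then $x_j^{-1} = x_{r-2-j}$ would also be in $A_n$, and one of the two cyclic distances between these positions is at most $r - 2 < r$, contradicting the minimality of $r({\cal M})$ unless $r - 2 - j \equiv j \pmod d$ and $x_j$ is an involution in $A_n$. The latter forces $r$ even, $j = (r-2)/2$, and $x_j = a^{n/2}$. In particular $x_0 \in B_n$, so by Proposition~\ref{iso} I may normalize $x_0 = b$. With this setup $c(k) = k$ for $x_k \in B_n$ and $c(k) \equiv r - 2 - k \pmod d$ for $x_k \in A_n$, and the power function $\pi(x_k) \equiv c(k+1) - c(k) \pmod d$ is determined by the types of two consecutive positions.

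The remainder is a case analysis on the types of positions neighbouring the extremal $A_n$-pair, in the spirit of Cases~1--3 of Lemma~\ref{r=2} and Cases~1--2 of Lemma~\ref{r=3}. For each placement of $A_n$- versus $B_n$-elements among $x_r, x_{r+1}, \ldots$ (symmetrically constrained by $\alpha$), I would invoke the skew-morphism identity $\varphi(xy) = \varphi(x)\varphi^{\pi(x)}(y)$, supplemented by Proposition~\ref{properties-skew}(3), at well-chosen products such as $\varphi(x_{r-1} x_{d-1}) = 1_{D_n}$, $\varphi(x_{r-1} x_0)$, $\varphi(x_0 x_{r-1})$, and $\varphi(x_{r-1}^2) = \varphi(a^{2i})$. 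The resulting index relations can be iterated forward (exactly as in Case~1 of Lemma~\ref{r=2}) to read off $x_r, x_{r+1}, \ldots$ inductively, and the target in every case is to produce either a collision between two supposedly distinct $x_j, x_k$ or a new $A_n$-inverse pair at cyclic distance strictly less than $r$, either of which contradicts the standing hypotheses.

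The main obstacle I expect is keeping the case tree manageable as $r$ grows: the possible middle involution $a^{n/2}$, the flexibility in placing $A_n$-elements on the long arc $x_r, \ldots, x_{d-2}$, and the parameter $r$ itself all multiply the subcases. My hope is that having almost all of $x_0, \ldots, x_{r-2}$ in $B_n$ rigidly pins the early $\pi$-values so that a single inductive propagation of the skew-morphism identity closes every case uniformly; if not, I would dispatch $r = 4$ and $r = 5$ directly, and then, for $r \ge 6$, combine the normality results of Proposition~\ref{half-kernel} and Theorem~\ref{kernel-at-least-4} with the long $B_n$-block on the short arc to force an $A_n$-pair at cyclic distance less than $r$.
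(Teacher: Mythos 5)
Your setup is sound and is essentially the paper's: you pass to a closest inverse pair in $X\cap A_n$, invoke the partially inverting reflection of Corollary~\ref{non-bal-ref-auto} to get $\alpha(x_k)=x_{c-k}$, and deduce that the short arc between the pair lies in $B_n$ (you are in fact more careful than the paper here, in flagging the possible exceptional involution $a^{n/2}$ at the midpoint of that arc). The genuine gap is that you never actually produce the contradiction. Everything after the normalization is a programme --- ``I would invoke\dots'', ``the target is to produce\dots'', ``my hope is\dots'' --- and you yourself concede that the case tree over the long arc may not close uniformly in $r$; the fallback via Proposition~\ref{half-kernel} and Theorem~\ref{kernel-at-least-4} is likewise only gestured at. An open-ended forward iteration in the style of Lemma~\ref{r=2} is precisely the kind of argument whose termination must be demonstrated, not hoped for, so as written this is not a proof.

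What you are missing is that the contradiction lives entirely on the short arc and requires no analysis of the long arc and no case distinctions. In the paper's labelling, with $x_0=a^{-i}$, $x_r=a^i$ and $x_j=a^{k_j}b$ for $1\le j\le r-1$, one has $\pi(x_j)=c(j+1)-c(j)=1$ for $1\le j\le r-2$, and the reflection gives $k_j+k_{r-j}=\mathrm{const}$, hence $k_1-k_2\equiv k_{r-2}-k_{r-1}\pmod{n}$. Therefore $x_1x_2=a^{k_1-k_2}=a^{k_{r-2}-k_{r-1}}=x_{r-2}x_{r-1}$ as group elements, so $\varphi$ takes the same value on both; but the skew-morphism identity with these $\pi$-values computes that value once as $\varphi(x_1)\varphi(x_2)=x_2x_3$ and once as $\varphi(x_{r-2})\varphi(x_{r-1})=x_{r-1}x_r$. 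Since $r\ge 4$ we have $x_2,x_3\in B_n$, so $x_2x_3\in A_n$, while $x_{r-1}x_r\in B_n$ --- a contradiction, uniformly in $r$ and $d$. (For $r=3$ the two products coincide identically and nothing follows, which is exactly why the bound is $r\le 3$.) Your proposal should be rewritten around this single computation; and the exceptional involution you correctly identified must then be disposed of rather than merely noted, since an $a^{n/2}$ sitting at position $(r-2)/2$ of the short arc would disturb the products $x_1x_2$ and $x_{r-2}x_{r-1}$ on which the argument rests.
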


\begin{proof}
Suppose that there exists a reflexible  regular  Cayley
map ${\cal M} = CM(D_n, X, p)$ whose rotation index
$r({\cal M})$ is greater than $3$. For our convenience,
let $r({\cal M})=r$ and let  $x_r = x_0^{-1}=a^i$ and for any
$j=1,\ldots, r-1$, $x_j \in B_n$. For any $j=1, \ldots, r-1$, let
$x_j = a^{k_j}b$.  Now
$\pi(a^{k_1}b)=\pi(a^{k_2}b)=\cdots=\pi(a^{k_{r-2}}b)=1$.   By
Corollary~\ref{non-bal-ref-auto}, there  exists an automorphism
$\psi$ of $D_n$ such that $\psi(a)=a^{-1}$ and
$\psi(a^{k_j}b)=a^{k_{r-j}}b$ for any $j=1, \ldots, r-1$. This
implies that $k_1+k_{r-1} \equiv k_2+k_{r-2} (\mod n)$, and thus
$k_1 -k_2 \equiv k_{r-2}-k_{r-1} (\mod n)$.  Since
\begin{eqnarray*} \varphi(a^{k_1}ba^{k_2}b)&=& \varphi(a^{k_1}b)\varphi(a^{k_2}b)=a^{k_2}ba^{k_3}b \ \ \mbox{and}  \\
\varphi(a^{k_{r-2}}ba^{k_{r-1}}b) &=&
\varphi(a^{k_{r-2}}b)\varphi(a^{k_{r-1}}b)=a^{k_{r-1}}ba^i
\end{eqnarray*}  and  $k_1 -k_2 \equiv k_{r-2}-k_{r-1} (\mod n)$, we
have $a^{k_2}ba^{k_3}b = a^{k_{r-1}}ba^i$, a contradiction.
Therefore there is no reflexible  regular Cayley map
for the dihedral group $D_n$ with $r({\cal M}) \ge 4$.
\end{proof}

\smallskip

\end{document}